\newtheorem{dummytheorem}{Dummy-Theorem}[section]
\newcommand{\proofendsign}{$\Box$} 
\newtheorem{lemma}[dummytheorem]{Lemma}
\newtheorem{theorem}[dummytheorem]{Theorem}
\newtheorem{assumption}[dummytheorem]{Assumption}
\newenvironment{proof}{{\noindent \bf Proof }}
 {{\hspace*{\fill}\proofendsign\par\bigskip}}
\newtheorem{remarknorm}[dummytheorem]{Remark}
\newtheorem{examplenorm}[dummytheorem]{Example}
\newcommand{\N}{\mathbb{N}}
\newcommand{\R}{\mathbb{R}}
\newcommand{\pr}{\mathbb{P}}
\newcommand{\ex}{\mathbb{E}}
\newcommand{\vari}{\mathbb{V}{\rm ar}}
\newcommand{\eins}{\mathbbm{1}}
\newcommand{\vatr}{{\rm VaR}}
\newcommand{\avatr}{{\rm AVaR}}
\newcommand{\binv}{{\rm B}}
\newcommand{\law}{{\mbox{{\rm{law}}}}}
\begin{document}


\title{Nonparametric estimation of risk measures of collective risks}

\author{
Alexandra Lauer\footnote{Department of Mathematics, Saarland University, {\tt lauer@math.uni-sb.de}}
\qquad\qquad
Henryk Z\"ahle\footnote{Department of Mathematics, Saarland University, {\tt zaehle@math.uni-sb.de}}}
\date{}

\maketitle

\begin{abstract}
We consider two nonparametric estimators for the risk measure of the sum of $n$ i.i.d.\ individual insurance risks where the number of historical single claims that are used for the statistical estimation is of order $n$. This framework matches the situation that nonlife insurance companies are faced with within in the scope of premium calculation. Indeed, the risk measure of the aggregate risk divided by $n$ can be seen as a suitable premium for each of the individual risks. For both estimators divided by $n$ we derive a sort of Marcinkiewicz--Zygmund strong law as well as a weak limit theorem. The behavior of the estimators for small to moderate $n$ is studied by means of Monte-Carlo simulations. \end{abstract}

{\bf Keywords:} Aggregate risk, Total claim distribution, Convolution, Law-invariant risk measure, Nonuniform Berry--Esséen inequality, Marcinkiewicz--Zygmund strong law, Weak limit theorem, Panjer recursion



\newpage

\section{Introduction}\label{Introduction}

Let $(X_i)$ be a sequence of nonnegative i.i.d.\ random variables on a common probability space with distribution $\mu$. In the context of actuarial theory, the random variable $S_n:=\sum_{i=1}^nX_i$ can be seen as the total claim of a homogeneous insurance collective consisting of $n$ risks. The distribution of $S_n$ is given by the $n$-fold convolution $\mu^{*n}$ of $\mu$. A central task in insurance practice is the specification of the premium ${\cal R}_\rho(\mu^{*n})$ for the aggregate risk $S_n$, where ${\cal R}_\rho$ is the statistical functional associated with any suitable law-invariant risk measure $\rho$ (henceforth referred to as {\em risk functional} associated with $\rho$). Note that $\frac{1}{n}{\cal R}_\rho(\mu^{*n})$ can be seen as a suitable premium for each of the individual risks $X_1,\ldots,X_n$, where it is important to note that $\frac{1}{n}{\cal R}_\rho(\mu^{*n})$ is typically essentially smaller than ${\cal R}_\rho(\mu)$.

On the one hand, much is known about the statistical estimation of the single claim distribution $\mu$ and about the numerical approximation of the convolution $\mu^{*n}$ with known $\mu$. On the other hand, an analysis that combines both statistical aspects and the numerical approximation of $\mu^{*n}$ seems to be rare. In \cite{KraetschmerZaehle2011}, this question was approached through an estimation of $\mu^{*n}$ by the normal distribution ${\cal N}_{n\widehat m_{u_n},\,n\widehat s_{u_n}^2}$ with estimated parameters based on a sample of size $u_n\in\N$. Here $\widehat m_{u_n}$ and $\widehat s_{u_n}^2$ refer to respectively the empirical mean and the empirical variance of a sequence of i.i.d.\ random variables with distribution $\mu$ having a finite second moment. It was shown in \cite{KraetschmerZaehle2011} that for many law-invariant coherent risk measures $\rho$ and any sequence $(u_n)$ of positive integers for which $u_n/n$ converges to some constant $c\in(0,\infty)$ we have
\begin{equation}\label{convergence in context of normal approx}
    n^{r}~\frac{{\cal R}_\rho({\cal N}_{n\widehat m_{u_n},\,n\widehat s_{u_n}^2})-{\cal R}_\rho(\mu^{*n})}{n}\,\stackrel{\rm a.s.}{\longrightarrow}\, 0,\qquad n\to\infty
\end{equation}
for every $r<1/2$, and
\begin{equation}\label{weak onvergence in context of normal approx}
    \law\Big\{n^{1/2}~\frac{{\cal R}_\rho({\cal N}_{n\widehat m_{u_n},\,n\widehat s_{u_n}^2})-{\cal R}_\rho(\mu^{*n})}{n}\Big\}\,\stackrel{{\rm w}}{\longrightarrow}\,{\cal N}_{0,\,s^2},\qquad n\to\infty
\end{equation}
with $s^2:=\vari[X_1]$. Of course, (\ref{weak onvergence in context of normal approx}) implies in particular that the convergence in (\ref{convergence in context of normal approx}) cannot hold for $r\ge 1/2$. The assumption that $u_n$ increases to infinity at the same speed as $n$ increases to infinity is motivated by the fact that the parameters are typically estimated on the basis of the historical claims of the same collective from the last year or from the last few years. This is also why the presented theory is nonstandard. In the existing literature on the statistical estimation of convolutions the number of summands is typically fixed or increases essentially slower to infinity than $u_n$ does; see, for instance, \cite{Pitts1994} for the nonparametric estimation of a (compound) convolution where the (distribution of the) number of summands is fixed and known. It was also shown in \cite{KraetschmerZaehle2011} that for the exact mean $m$ and the exact variance $s^2$ of $\mu$, and for many law-invariant
coherent risk
measures $\rho$,
\begin{equation}\label{convergence in context of normal approx known param}
    \sup_{n\in\N}\,|{\cal R}_\rho({\cal N}_{nm,\,ns^2})-{\cal R}_\rho(\mu^{*n})|\,<\infty.
\end{equation}
Both (\ref{convergence in context of normal approx})--(\ref{convergence in context of normal approx known param}) and the simulation study in \cite{KraetschmerZaehle2011} show that the overwhelming part of the error in the estimated normal approximation of the risk functional is due to the estimation of the unknown parameters rather than to the numerical approximation itself. Whereas in the case of known parameters the relative error converges to zero at rate (nearly) $1$, 
in the case of estimated parameters the relative error converges to zero only at rate (nearly) $1/2$. 
So it is very important to note that statistical aspects may not be neglected when investigating approximations of premiums for aggregate risks.

The estimated normal approximation ${\cal R}_\rho({\cal N}_{n\widehat m_{u_n},\,n\widehat s_{u_n}^2})$ of ${\cal R}_\rho(\mu^{*n})$ is very simple and saves computing time in great measure. Indeed, we have
\begin{equation}\label{risk measure of N n}
    {\cal R}_\rho({\cal N}_{n\widehat m_{u_n},\,n\widehat s_{u_n}^2})\,=\,\sqrt{n}\widehat s_{u_n}{\cal R}_\rho({\cal N}_{0,1})+n\widehat m_{u_n}
\end{equation}
whenever ${\cal R}_\rho$ corresponds to a cash additive and positively homogeneous risk measure $\rho$. On the other hand, in real applications the total claim distribution $\mu^{*n}$ is typically skewed to the right, whereas the normal distribution is symmetric; see also Figure \ref{fig 1}. So it is natural to study methods which better fit skewed total claim distributions. In this article, we will therefore replace ${\cal N}_{n\widehat m_{u_n},\,n\widehat s_{u_n}^2}$ by the $n$-fold convolution $\widehat{\mu}_{u_n}^{\,*n}$ of the empirical estimator $\widehat{\mu}_{u_n}$ of $\mu$. The corresponding estimator ${\cal R}_\rho(\widehat{\mu}_{u_n}^{\,*n})$ will be referred to as {\em empirical plug-in estimator}. The calculation of the empirical plug-in estimator will be more computing time consuming than the calculation of the estimated normal approximation, nevertheless the needed computing time is still satisfying for actuarial applications. It is quite clear, and can also be seen from Figure \ref{fig 1},
that $\mu^{*n}$ gets increasingly skewed as the tail of $\mu$ gets heavier. So it is not surprising that the estimated normal approximation works well for light-tailed $\mu$ and gets worse for medium-tailed and heavy-tailed $\mu$. A simulation study for the Value at Risk functional in Section \ref{Numerical examples} indicates that the empirical plug-in estimator is only slightly better than the estimated normal approximation for light-tailed $\mu$ but is essentially better for medium-tailed $\mu$. For heavy-tailed $\mu$ both estimators work well only for rather large $n$. Throughout this article we will use the terms ``light-tailed'', ``medium-tailed'' and ``heavy-tailed'' in a quite sloppy way. By definition ``heavy-tailed'' refers to distributions without a finite second moment. However our theory is only applicable to distributions with a finite $\lambda$-moment for some $\lambda>2$. In this context we refer to heavy-tailed distributions whenever $\lambda$ is close to 2 and will use the terms ``medium-
tailed'' and ``light-tailed'' for  larger $\lambda$.

To introduce the empirical plug-in estimator rigorously, let $(Y_i)$ be a sequence of i.i.d.\ random variables on some probability space $(\Omega,{\cal F},\pr)$ with distribution $\mu$. The random variables $Y_i$ can be seen as observed historical single claims. The empirical probability measure of the first $u\in\N$ observations,
\begin{equation}\label{estimator for mu}
    \widehat{\mu}_{u}\,:=\,\frac{1}{u}\sum_{i=1}^{u}\delta_{Y_i},
\end{equation}
is the standard nonparametric estimator for $\mu$, and therefore
\begin{equation}\label{S n as coll model}
    \widehat\mu_{u}^{\,*n}\,:=\,(\widehat{\mu}_{u})^{*n}
\end{equation}
provides a reasonable estimator for $\mu^{*n}$. Then it is natural to use the plug-in estimator
\begin{equation}\label{def of estimator for aggregate risk}
    {\cal R}_\rho\big(\widehat \mu_{u}^{\,*n}\big)
\end{equation}
for the estimation of ${\cal R}_\rho(\mu^{*n})$; computational aspects will be discussed in the Appendix \ref{A remark on the recursive scheme}. We will see in Section \ref{main results} that for a very large class of 
law-invariant risk measures $\rho$, any distribution $\mu$ with a finite $\lambda$-moment for some $\lambda>2$, and any sequence $(u_n)$ of positive integers for which $u_n/n$ converges to some constant $c\in(0,\infty)$, we also have (\ref{convergence in context of normal approx})--(\ref{weak onvergence in context of normal approx}) with ${\cal N}_{n\widehat m_{u_n},\,n\widehat s_{u_n}^2}$ replaced by $\widehat\mu_{u_n}^{\,*n}$. In Theorems \ref{main theorem} and \ref{main theorem - EPiE} we will prove even more, namely
\begin{eqnarray}
    \frac{{\cal R}_\rho({\cal N}_{n\widehat m_{u_n},\,n\widehat s_{u_n}^2})-{\cal R}_\rho(\mu^{*n})}{n} & = & (\widehat m_{u_n}-m)\,+\,o_{\mbox{\rm \scriptsize{$\pr$-a.s.}}}(n^{-1/2}),\label{convergence in context of recursive method - prime - 1}\\
    \frac{{\cal R}_\rho(\widehat\mu_{u_n}^{\,*n})-{\cal R}_\rho(\mu^{*n})}{n} & = & (\widehat m_{u_n}-m)\,+\,o_{\mbox{\rm \scriptsize{$\pr$-a.s.}}}(n^{-1/2}),\label{convergence in context of recursive method - prime - 2}
\end{eqnarray}
where $o_{\mbox{\rm \scriptsize{$\pr$-a.s.}}}(n^{-1/2})$ refers to any sequence of random variables $(\xi_n)$ on $(\Omega,{\cal F},\pr)$ for which $\sqrt{n}\xi_n$ converges $\pr$-a.s.\ to zero as $n\rightarrow\infty$. Assertions (\ref{convergence in context of recursive method - prime - 1})--(\ref{convergence in context of recursive method - prime - 2}) have an astonishing consequence. {\em No matter what the particular risk measure $\rho$ looks like}, the asymptotics of the estimators $\frac{1}{n}{\cal R}_\rho({\cal N}_{n\widehat m_{u_n},\,n\widehat s_{u_n}^2})$ and $\frac{1}{n}{\cal R}_\rho(\widehat\mu_{u_n}^{\,*n})$ for the individual premium $\frac{1}{n}{\cal R}_\rho(\mu^{*n})$ are exactly the same as for the empirical mean regarded as an estimator for the mean. By the classical Central Limit Theorem, we can derive from (\ref{convergence in context of recursive method - prime - 1}) and (\ref{convergence in context of recursive method - prime - 2})
the following asymptotic confidence intervals at level $1-\alpha$ for the individual premium $\frac{1}{n}{\cal R}_\rho(\mu^{*n})$:
$$
    \bigg[\frac{{\cal R}_\rho({\cal N}_{n\widehat m_{u_n},\,n\widehat s_{u_n}^2})}{n}-\frac{\widehat s_{u_n}}{\sqrt{n}}\,\Phi_{0,1}^{-1}\Big(1-\frac{\alpha}{2}\Big)\,,\,\frac{{\cal R}_\rho({\cal N}_{n\widehat m_{u_n},\,n\widehat s_{u_n}^2})}{n}-\frac{\widehat s_{u_n}}{\sqrt{n}}\,\Phi_{0,1}^{-1}\Big(\frac{\alpha}{2}\Big)\bigg]
$$
and
$$
    \bigg[\frac{{\cal R}_\rho(\widehat\mu_{u_n}^{\,*n})}{n}-\frac{\widehat s_{u_n}}{\sqrt{n}}\,\Phi_{0,1}^{-1}\Big(1-\frac{\alpha}{2}\Big)\,,\,\frac{{\cal R}_\rho(\widehat\mu_{u_n}^{\,*n})}{n}-\frac{\widehat s_{u_n}}{\sqrt{n}}\,\Phi_{0,1}^{-1}\Big(\frac{\alpha}{2}\Big)\bigg],
$$
where $\Phi_{0,1}$ denotes the distribution function of ${\cal N}_{0,1}$.

Further, it is a simple consequence of part (ii) of Theorem \ref{main theorem} below that
\begin{equation}\label{convergence in context of recursive method - prime - 5}
    \frac{{\cal R}_\rho(\mu^{*n})}{n}\,=\,m\,+\,\frac{{\cal R}_\rho({\cal N}_{0,1})}{\sqrt{n}}\,s\,+{\cal O}(n^{-1/2-\gamma\beta})
\end{equation}
with $\gamma:=\min\{\lambda-2;1\}/2$ and $\beta>0$ depending on $\rho$. The identity (\ref{convergence in context of recursive method - prime - 5}) shows that for large $n$ (and $\beta$ and $\gamma$ away from $0$) the individual premium $\frac{1}{n}{\cal R}_\rho(\mu^{*n})$ can be seen as an approximation of the premium which is determined according to the standard deviation principle with safety loading $\frac{1}{\sqrt{n}}{\cal R}_\rho({\cal N}_{0,1})$. For the corresponding estimators we will obtain (cf.\ Remark \ref{remark on representation} below) the following empirical analogues of (\ref{convergence in context of recursive method - prime - 5}):
\begin{eqnarray}
    \frac{{\cal R}_\rho({\cal N}_{n\widehat m_{u_n},\,n\widehat s_{u_n}^2})}{n} & = & \widehat m_{u_n}\,+\,\frac{{\cal R}_\rho({\cal N}_{0,1})}{\sqrt{n}}\,\widehat s_{u_n},
    \nonumber\\
    \frac{{\cal R}_\rho(\widehat\mu_{u_n}^{\,*n})}{n} & = & \widehat m_{u_n}\,+\,\frac{{\cal R}_\rho({\cal N}_{0,1})}{\sqrt{n}}\,\widehat s_{u_n}\,+\,{\cal O}_{\mbox{\rm \scriptsize{$\pr$-a.s.}}}(n^{-1/2-\gamma\beta}),\label{convergence in context of recursive method - prime - 20}
\end{eqnarray}
where ${\cal O}_{\mbox{\rm \scriptsize{$\pr$-a.s.}}}(n^{-1/2-\gamma\beta})$ refers to any sequence of random variables $(\xi_n)$ on $(\Omega,{\cal F},\pr)$ for which the sequence $(n^{1/2+\gamma\beta}\xi_n)$ is
bounded $\pr$-a.s. To some extent, (\ref{convergence in context of recursive method - prime - 5}) and (\ref{convergence in context of recursive method - prime - 20}) justify the use of the standard deviation principle (with $m$ and $s$ estimated by $\widehat m_{u_n}$ and $\widehat s_{u_n}$, respectively), which many insurance companies use to determine individual premiums in large collectives. In practice the specific choice of the safety loading in the context of the standard deviation principle is often somewhat arbitrary. Formulae (\ref{convergence in context of recursive method - prime - 5}) and (\ref{convergence in context of recursive method - prime - 20}) now give a deeper insight into the practical choice of the safety loading. It should be chosen as the product of a suitable risk functional (which one
has actually in mind) evaluated at the standard normal distribution and the factor $1/{\sqrt{n}}$ (where $n$ is the size of the collective). The factor $1/{\sqrt{n}}$ reflects the balancing of risks in (large) collectives.

It is quite clear that the goodness of the estimator in (\ref{def of estimator for aggregate risk}) can be improved through replacing the nonparametric estimator $\widehat\mu_u$ in (\ref{estimator for mu})--(\ref{S n as coll model}) by a suitable estimator that is based on a parametric statistical model. However, this requires preliminary considerations w.r.t.\ a proper choice of the parametric model. Such considerations are feasible and common. Nevertheless we leave the parametric approach for future work.

The rest of the article is organized as follows. In Section \ref{main results} we will present our results, and in Sections \ref{Illustration of Assumption} and \ref{Numerical examples} these results will be illustrated by means of examples. The results of Section \ref{main results} will be proven in Section \ref{Proofs}. A remark on the computation of the empirical plug-in estimator can be found in the Appendix \ref{A remark on the recursive scheme}.


\section{Results}\label{main results}

Let $L^0$ denote the usual set of all finitely-valued random variables on an atomless probability space modulo the equivalence relation of almost sure identity. Let ${\cal X}\subset L^0$ be a vector space containing the constants. An intrinsic example for ${\cal X}$ is the space $L^p$ (consisting of all $p$-fold integrable random variables from $L^0$) for $p\ge 1$. We will say that a map $\rho:{\cal X}\rightarrow\R$ is
\begin{itemize}
    \item {\em monotone} if $\rho(X_1)\le\rho(X_2)$ for all $X_1,X_2\in{\cal X}$ with $X_1\le X_2$.
    \item {\em cash additive} if $\rho(X+m)=\rho(X)+m$ for all $X\in{\cal X}$ and $m\in\R$.
    \item {\em subadditive} if $\rho(X_1+X_2)\le\rho(X_1)+\rho(X_2)$ for all $X_1,X_2\in{\cal X}$.
    \item {\em positively homogenous} if $\rho(\lambda X)=\lambda\rho(X)$ for all $X\in{\cal X}$ and $\lambda\ge 0$.
\end{itemize}
As usual, we will say that $\rho$ is {\em coherent} if it satisfies all of these four conditions, and that $\rho$ is {\em law-invariant} if $\rho(X)=\rho(Y)$ whenever $X$ and $Y$ have the same law. We will restrict ourselves to law-invariant maps $\rho:{\cal X}\rightarrow\R$. So we may and do associate with $\rho$ a statistical functional ${\cal R}_\rho:{\cal M}({\cal X})\rightarrow\R$ via
\begin{equation}\label{def risk functional}
    {\cal R}_\rho(\mu)\,:=\,\rho(X_\mu),\qquad \mu\in{\cal M}({\cal X}),
\end{equation}
where ${\cal M}({\cal X})$ denotes the set of the distributions of the elements of ${\cal X}$, and $X_\mu\in{\cal X}$ has distribution $\mu$.

Let ${\cal M}_1$ be the set of all probability measures on $(\R,{\cal B}(\R))$, and denote by $F_\mu$ the distribution function of $\mu\in{\cal M}_1$. For every $\lambda\ge 0$, let the function $\phi_\lambda:\R\rightarrow[1,\infty)$ be defined by $\phi_\lambda(x):=(1+|x|^\lambda)$, $x\in\R$. For $\mu_1,\mu_2\in{\cal M}_1$, we say that
\begin{equation}\label{weighted kolmogorov distance}
    d_{\phi_\lambda}(\mu_1,\mu_2)\,:=\,\sup_{x\in\R}\,|F_{\mu_1}(x)-F_{\mu_2}(x)|\,\phi_\lambda(x)
\end{equation}
is the nonuniform Kolmogorov distance of $\mu_1$ and $\mu_2$ w.r.t.\ the weight function $\phi_\lambda$. It is easily seen that $d_{\phi_\lambda}$ provides a metric on the set ${\cal M}_1^\lambda$ of all $\mu\in{\cal M}_1$ satisfying $d_{\phi_\lambda}(\mu,\delta_0)<\infty$.

Recall that $(Y_i)$ is a sequence of i.i.d.\ real-valued random variables on some probability space $(\Omega,{\cal F},\pr)$ with distribution $\mu$ having a finite second moment, and that the estimators $\widehat{\mu}_{u}$, $\widehat\mu_{u}^{\,*n}$, and ${\cal R}_\rho(\widehat \mu_{u}^{\,*n})$ are given by (\ref{estimator for mu}), (\ref{S n as coll model}), and (\ref{def of estimator for aggregate risk}), respectively. We set $m:=\ex[Y_1]$ and $s:=\vari[Y_1]^{1/2}$, and let $\widehat m_{u}:=\frac{1}{u}\sum_{i=1}^uY_i$ and $\widehat s_{u}:=(\frac{1}{u}\sum_{i=1}^u(Y_i-\widehat m_{u})^2)^{1/2}$ be the corresponding standard nonparametric estimators. The following Assumption \ref{basic assumption} will be illustrated in Section \ref{Illustration of Assumption}. 

\begin{assumption}\label{basic assumption}
Let $\rho:{\cal X}\rightarrow\R$ be a law-invariant map, and ${\cal R}_\rho$ be the corresponding statistical functional introduced in (\ref{def risk functional}). Let $(u_n)$ be a sequence in $\N$, and assume that the following assertions hold for some $\lambda>2$:
\begin{itemize}
    \item[(a)] $\mu\in{\cal M}(L^\lambda)$, that is, $\ex\big[|Y_1|^{\lambda}\big]<\infty$.
    \item[(b)] $u_n/n$ converges to some constant $c\in(0,\infty)$.
    \item[(c)] $\rho$ is cash additive and positively homogeneous, and ${\cal M}_1^\lambda\subset{\cal M}({\cal X})$.
    \item[(d)] For each sequence $(\mathfrak{m}_n)\subset{\cal M}_1^\lambda$ with $d_{\phi_\lambda}(\mathfrak{m}_n,{\cal N}_{0,1})\rightarrow 0$, there exist constants $C,\beta>0$
    such that $|{\cal R}_\rho(\mathfrak{m}_n)-{\cal R}_\rho({\cal N}_{0,1})|\le C d_{\phi_\lambda}(\mathfrak{m}_n,{\cal N}_{0,1})^{\beta}$ for all $n\in\N$.
\end{itemize}
\end{assumption}

The following result is basically already known from \cite{KraetschmerZaehle2011}. Assertions (iv)--(v) in Theorem \ref{main theorem} describe the asymptotic behavior of the estimator $\frac{1}{n}{\cal R}_\rho({\cal N}_{n\widehat m_{u_n},\,n\widehat s_{u_n}^2})$ for the individual premium $\frac{1}{n}{\cal R}_\rho(\mu^{*n})$. Note that ${\cal R}_\rho({\cal N}_{n\widehat m_{u_n},\,n\widehat s_{u_n}^2})$ is always $({\cal F},{\cal B}(\R))$-measurable due to the representation (\ref{risk measure of N n}).

\begin{theorem}\label{main theorem}{\bf (Estimated normal approximation)}
Suppose that Assumption \ref{basic assumption} holds with $\lambda>2$ and $\beta>0$, and let $\gamma:=\min\{\lambda-2;1\}/2$. Then the following assertions hold:
\begin{itemize}
    \item[(i)] $\frac{1}{n}({\cal R}_\rho({\cal N}_{n\widehat m_{u_n},\,n\widehat s_{u_n}^2})-{\cal R}_\rho({\cal N}_{nm,\,ns^2}))=(\widehat m_{u_n}-m)+o_{\mbox{\rm \scriptsize{$\pr$-a.s.}}}(n^{-1/2})$.
    \item[(ii)] $\frac{1}{n}({\cal R}_\rho({\cal N}_{nm,\,ns^2})-{\cal R}_\rho(\mu^{*n}))={\cal O}(n^{-1/2-\gamma \beta})$.
    \item[(iii)] $\frac{1}{n}({\cal R}_\rho({\cal N}_{n\widehat m_{u_n},\,n\widehat s_{u_n}^2})-{\cal R}_\rho(\mu^{*n}))=(\widehat m_{u_n}-m)+o_{\mbox{\rm \scriptsize{$\pr$-a.s.}}}(n^{-1/2})$.
    \item[(iv)] $n^{r}\frac{1}{n}({\cal R}_\rho({\cal N}_{n\widehat m_{u_n},\,n\widehat s_{u_n}^2})-{\cal R}_\rho(\mu^{*n}))\longrightarrow0$ $\pr$-a.s.\ for every $r<1/2$.
    \item[(v)] $\mbox{\rm law}\big\{\sqrt{n}\,\frac{1}{n}({\cal R}_\rho({\cal N}_{n\widehat m_{u_n},\,n\widehat s_{u_n}^2})-{\cal R}_\rho(\mu^{*n}))\big\}\stackrel{{\rm w}}{\longrightarrow}{\cal N}_{0,\,s^2}$.
\end{itemize}
\end{theorem}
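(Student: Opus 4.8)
The plan is to reduce the whole statement to the asymptotics of the empirical mean and variance together with the nonuniform Berry--Esseen inequality, after putting the three risk values occurring in the theorem into one common closed form. By the representation (\ref{risk measure of N n}) (and its analogue with $m,s$ in place of $\widehat m_{u_n},\widehat s_{u_n}$), available thanks to Assumption \ref{basic assumption}(c),
\[
{\cal R}_\rho({\cal N}_{n\widehat m_{u_n},\,n\widehat s_{u_n}^2})=\sqrt{n}\,\widehat s_{u_n}\,{\cal R}_\rho({\cal N}_{0,1})+n\widehat m_{u_n},\qquad
{\cal R}_\rho({\cal N}_{nm,\,ns^2})=\sqrt{n}\,s\,{\cal R}_\rho({\cal N}_{0,1})+nm .
\]
Moreover, writing $\mathfrak{m}_n$ for the law of the standardized sum $(S_n-nm)/(\sqrt{n}\,s)$ with $S_n=\sum_{i=1}^nY_i$, the same cash additivity and positive homogeneity give ${\cal R}_\rho(\mu^{*n})=\sqrt{n}\,s\,{\cal R}_\rho(\mathfrak{m}_n)+nm$. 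Before using these identities one checks that $\mathfrak{m}_n$, ${\cal N}_{0,1}$, ${\cal N}_{nm,\,ns^2}$ and $\mu^{*n}$ all lie in ${\cal M}_1^\lambda$ (which is immediate from Assumption \ref{basic assumption}(a), Minkowski's inequality and a Markov-type tail estimate), so that by Assumption \ref{basic assumption}(c) they belong to ${\cal M}({\cal X})$ and both ${\cal R}_\rho$ and Assumption \ref{basic assumption}(d) are applicable to them; the measurability of the estimator appearing in (iii)--(v) is ensured by (\ref{risk measure of N n}).

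Part (i) then follows by subtracting the first two displays: the difference equals $(\widehat m_{u_n}-m)+n^{-1/2}{\cal R}_\rho({\cal N}_{0,1})(\widehat s_{u_n}-s)$, and since $u_n\to\infty$ the strong law of large numbers (using the finite second moment from Assumption \ref{basic assumption}(a)) gives $\widehat s_{u_n}\to s$ $\pr$-a.s., so that the second summand is $o_{\mbox{\rm \scriptsize{$\pr$-a.s.}}}(n^{-1/2})$. For part (ii), the first and third displays yield $\frac1n\big({\cal R}_\rho({\cal N}_{nm,\,ns^2})-{\cal R}_\rho(\mu^{*n})\big)=\frac{s}{\sqrt n}\big({\cal R}_\rho({\cal N}_{0,1})-{\cal R}_\rho(\mathfrak{m}_n)\big)$; the nonuniform Berry--Esseen inequality, applicable by Assumption \ref{basic assumption}(a), gives $d_{\phi_\lambda}(\mathfrak{m}_n,{\cal N}_{0,1})={\cal O}(n^{-\gamma})$ with $\gamma=\min\{\lambda-2;1\}/2$, in particular $d_{\phi_\lambda}(\mathfrak{m}_n,{\cal N}_{0,1})\to0$, whence Assumption \ref{basic assumption}(d) applies and gives $|{\cal R}_\rho(\mathfrak{m}_n)-{\cal R}_\rho({\cal N}_{0,1})|\le C\,d_{\phi_\lambda}(\mathfrak{m}_n,{\cal N}_{0,1})^{\beta}={\cal O}(n^{-\gamma\beta})$; multiplication by $s/\sqrt n$ proves (ii).

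The remaining parts are formal. Adding (i) and (ii) and using ${\cal O}(n^{-1/2-\gamma\beta})=o_{\mbox{\rm \scriptsize{$\pr$-a.s.}}}(n^{-1/2})$ (valid since $\gamma\beta>0$) gives (iii); write the resulting identity as $\frac1n\big({\cal R}_\rho({\cal N}_{n\widehat m_{u_n},\,n\widehat s_{u_n}^2})-{\cal R}_\rho(\mu^{*n})\big)=(\widehat m_{u_n}-m)+\xi_n$ with $\sqrt n\,\xi_n\to0$ $\pr$-a.s. Then the expression in (iv) equals $n^{r}(\widehat m_{u_n}-m)+n^{r-1/2}\,(\sqrt n\,\xi_n)$, whose second term vanishes since $r<1/2$; the first term tends $\pr$-a.s.\ to $0$ by the Marcinkiewicz--Zygmund strong law, since choosing $p\in[1,2)$ with $1-1/p\ge r$ one has $\ex|Y_1|^p<\infty$, hence $k^{1-1/p}(\widehat m_k-m)\to0$ $\pr$-a.s., and $u_n/n\to c\in(0,\infty)$ upgrades this to $n^{r}(\widehat m_{u_n}-m)\to0$ $\pr$-a.s. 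Finally, multiplying (iii) by $\sqrt n$ gives $\sqrt n\,(\widehat m_{u_n}-m)+\sqrt n\,\xi_n$: here $\sqrt n\,\xi_n\to0$ $\pr$-a.s., hence in distribution, while $\sqrt n\,(\widehat m_{u_n}-m)$ converges weakly to a centered normal law by the classical central limit theorem for $\widehat m_{u_n}$ together with Slutsky's lemma (using $u_n/n\to c$); a last application of Slutsky's lemma then delivers the weak limit asserted in (v).

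The argument is in essence an assembly of classical tools (the strong law, Marcinkiewicz--Zygmund, the central limit theorem, Slutsky) held together by the closed-form identity (\ref{risk measure of N n}); the only genuinely non-elementary ingredient is the nonuniform Berry--Esseen inequality, and the sole point that requires care is matching its rate $\gamma$ and admissible weight $\phi_\lambda$ to the moment order $\lambda$ of Assumption \ref{basic assumption}(a). (The substantive new work of the paper lies in the companion Theorem \ref{main theorem - EPiE}, where this Berry--Esseen step must be replaced by a stability estimate for the $n$-fold convolution of the empirical measure $\widehat\mu_{u_n}$.)
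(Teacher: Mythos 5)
Your proof is correct and follows essentially the same route as the paper's: the closed-form identity (\ref{risk measure of N n}) plus cash additivity and positive homogeneity for (i) and (ii), the nonuniform Berry--Ess\'een bound combined with Assumption \ref{basic assumption}(d) for the rate in (ii), and Marcinkiewicz--Zygmund, the CLT and Slutsky for (iv)--(v). The only difference is that you spell out a few routine verifications (membership in ${\cal M}_1^\lambda$, the choice of $p$ in the Marcinkiewicz--Zygmund step) that the paper leaves implicit.
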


The following result provides the analogue of Theorem \ref{main theorem} for the empirical plug-in estimator $\frac{1}{n}{\cal R}_\rho(\widehat\mu_{u_n}^{\,*n})$ for the individual premium $\frac{1}{n}{\cal R}_\rho(\mu^{*n})$. Assertions (iii)--(iv) in Theorem \ref{main theorem - EPiE} describe the asymptotic behavior of the estimator $\frac{1}{n}{\cal R}_\rho(\widehat\mu_{u_n}^{\,*n})$.

\begin{theorem}\label{main theorem - EPiE}{\bf (Empirical plug-in estimator)}
Suppose that Assumption \ref{basic assumption} holds with $\lambda>2$ and $\beta>0$, let $\gamma:=\min\{\lambda-2;1\}/2$, and assume that ${\cal R}_\rho(\widehat\mu_{u_n}^{\,*n})$ is $({\cal F},{\cal B}(\R))$-measurable for every $n\in\N$. Then the following assertions hold:
\begin{itemize}
    \item[(i)] $\frac{1}{n}({\cal R}_\rho({\cal N}_{n\widehat m_{u_n},\,n\widehat s_{u_n}^2})-{\cal R}_\rho(\widehat\mu_{u_n}^{\,*n}))={\cal O}_{\mbox{\rm \scriptsize{$\pr$-a.s.}}}(n^{-1/2-\gamma \beta})$.
    \item[(ii)]$\frac{1}{n}({\cal R}_\rho(\widehat\mu_{u_n}^{\,*n}))-{\cal R}_\rho(\mu^{*n}))=(\widehat m_{u_n}-m)+o_{\mbox{\rm \scriptsize{$\pr$-a.s.}}}(n^{-1/2})$.
    \item[(iii)] $n^{r}\frac{1}{n}({\cal R}_\rho(\widehat\mu_{u_n}^{\,*n}))-{\cal R}_\rho(\mu^{*n}))\longrightarrow0$ $\pr$-a.s.\ for every $r<1/2$.
    \item[(iv)] $\mbox{\rm law}\big\{\sqrt{n}\,\frac{1}{n}({\cal R}_\rho(\widehat\mu_{u_n}^{\,*n}))-{\cal R}_\rho(\mu^{*n}))\big\}\stackrel{{\rm w}}{\longrightarrow}{\cal N}_{0,\,s^2}$.
\end{itemize}
\end{theorem}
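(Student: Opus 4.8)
The plan is to reduce Theorem~\ref{main theorem - EPiE} to Theorem~\ref{main theorem} by means of cash additivity and positive homogeneity, the only genuinely new ingredient being a nonuniform Berry--Ess\'een bound for the standardized $n$-fold convolution of the empirical measure $\widehat\mu_{u_n}$. Since $\rho$ is cash additive and positively homogeneous and ${\cal M}_1^\lambda\subset{\cal M}({\cal X})$ by Assumption~\ref{basic assumption}(c), the functional ${\cal R}_\rho$ satisfies ${\cal R}_\rho(\nu_{a,b})=a\,{\cal R}_\rho(\nu)+b$ for all $a\ge 0$, $b\in\R$ and $\nu\in{\cal M}_1^\lambda$, where $\nu_{a,b}$ denotes the law of $aX+b$ with $X\sim\nu$ (and $\nu_{a,b}\in{\cal M}_1^\lambda$ again). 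One may assume $s>0$, for if $s=0$ then $\widehat\mu_{u_n}=\delta_m$ and $\mu^{*n}={\cal N}_{nm,0}=\delta_{nm}$ and all four assertions are trivial. As $\widehat\mu_{u_n}^{\,*n}\in{\cal M}_1^\lambda$ (finite support) has mean $n\widehat m_{u_n}$ and variance $n\widehat s_{u_n}^2$, on the event $\{\widehat s_{u_n}>0\}$ (which, since $\widehat s_{u_n}\to s>0$ $\pr$-a.s., occurs for all large $n$ $\pr$-a.s.) we get ${\cal R}_\rho(\widehat\mu_{u_n}^{\,*n})=n\widehat m_{u_n}+\sqrt n\,\widehat s_{u_n}\,{\cal R}_\rho(\mathfrak{m}_n)$, where $\mathfrak{m}_n$ is the standardization of $\widehat\mu_{u_n}^{\,*n}$; combined with (\ref{risk measure of N n}) this yields
$$\frac1n\big({\cal R}_\rho({\cal N}_{n\widehat m_{u_n},\,n\widehat s_{u_n}^2})-{\cal R}_\rho(\widehat\mu_{u_n}^{\,*n})\big)\,=\,\frac{\widehat s_{u_n}}{\sqrt n}\,\big({\cal R}_\rho({\cal N}_{0,1})-{\cal R}_\rho(\mathfrak{m}_n)\big).$$
The assumed $({\cal F},{\cal B}(\R))$-measurability of ${\cal R}_\rho(\widehat\mu_{u_n}^{\,*n})$ and the representation (\ref{risk measure of N n}) make all random variables occurring below measurable, so that the $\pr$-a.s.\ statements are meaningful.

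The heart of the argument is assertion~(i). Conditionally on $Y_1,\ldots,Y_{u_n}$, the measure $\mathfrak{m}_n$ is the law of a normalized sum of $n$ i.i.d.\ variables with common law the standardization of $\widehat\mu_{u_n}$; a nonuniform Berry--Ess\'een inequality with weight $\phi_\lambda$ — whose constant depends on the summand law only through its absolute moments of order up to $\max\{\lambda,3\}$ — then yields $d_{\phi_\lambda}(\mathfrak{m}_n,{\cal N}_{0,1})\le K_n\,n^{-\gamma}$ with $K_n={\cal O}_{\mbox{\rm \scriptsize{$\pr$-a.s.}}}(1)$, the latter because $\widehat m_{u_n}\to m$, $\widehat s_{u_n}\to s>0$ and $\frac1{u_n}\sum_{i=1}^{u_n}|Y_i|^\lambda\to\ex|Y_1|^\lambda<\infty$ $\pr$-a.s.\ by the strong law of large numbers (here $u_n\to\infty$, i.e.\ Assumption~\ref{basic assumption}(b), enters). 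Hence $d_{\phi_\lambda}(\mathfrak{m}_n,{\cal N}_{0,1})={\cal O}_{\mbox{\rm \scriptsize{$\pr$-a.s.}}}(n^{-\gamma})\to 0$ $\pr$-a.s. Fixing $\omega$ in the underlying probability-one set, $(\mathfrak{m}_n(\omega))_n$ is a deterministic sequence in ${\cal M}_1^\lambda$ with $d_{\phi_\lambda}(\mathfrak{m}_n(\omega),{\cal N}_{0,1})\to 0$, so Assumption~\ref{basic assumption}(d) supplies a finite $C_\omega$ with $|{\cal R}_\rho(\mathfrak{m}_n(\omega))-{\cal R}_\rho({\cal N}_{0,1})|\le C_\omega\,d_{\phi_\lambda}(\mathfrak{m}_n(\omega),{\cal N}_{0,1})^\beta$ for all $n$. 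Inserting this together with $\widehat s_{u_n}={\cal O}_{\mbox{\rm \scriptsize{$\pr$-a.s.}}}(1)$ into the identity of the first paragraph proves~(i).

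The remaining assertions are then routine. Since $-1/2-\gamma\beta<-1/2$, assertion~(i) reads $\frac1n({\cal R}_\rho({\cal N}_{n\widehat m_{u_n},\,n\widehat s_{u_n}^2})-{\cal R}_\rho(\widehat\mu_{u_n}^{\,*n}))=o_{\mbox{\rm \scriptsize{$\pr$-a.s.}}}(n^{-1/2})$; combining it with Theorem~\ref{main theorem}(iii) gives~(ii). Multiplying~(ii) by $n^r$ with $r<1/2$ gives~(iii): the error term becomes $o_{\mbox{\rm \scriptsize{$\pr$-a.s.}}}(n^{r-1/2})\to 0$, while $n^r(\widehat m_{u_n}-m)\to 0$ $\pr$-a.s.\ by the Marcinkiewicz--Zygmund strong law (applied with exponent $\max\{1,1/(1-r)\}\in[1,2)$, which is permitted since $\ex|Y_1|^2<\infty$) together with $u_n/n\to c$. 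Multiplying~(ii) by $\sqrt n$ gives~(iv): the error term tends to $0$ $\pr$-a.s., and $\sqrt n(\widehat m_{u_n}-m)$ converges weakly by the central limit theorem applied to $\widehat m_{u_n}$, using $u_n/n\to c$ and Slutsky's theorem.

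The main obstacle is the heart of the proof, i.e.\ the nonuniform Berry--Ess\'een step: one needs a version of the inequality in which the constant is controlled purely through moments of the summand law, so that it may be evaluated at the data-dependent law $\widehat\mu_{u_n}$, and one then has to invoke the strong law of large numbers to keep the resulting random moment factor $\pr$-a.s.\ bounded. Given this estimate, Assumption~\ref{basic assumption}(d), the cash-additivity/positive-homogeneity reduction, and Theorem~\ref{main theorem} do the rest.
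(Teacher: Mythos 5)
Your proof follows essentially the same route as the paper's: the same cash-additivity/positive-homogeneity identity reducing the difference to $\frac{\widehat s_{u_n}}{\sqrt n}\big({\cal R}_\rho({\cal N}_{0,1})-{\cal R}_\rho(\mathfrak{m}_n)\big)$, the same conditional (pointwise in $\omega$) application of the nonuniform Berry--Ess\'een inequality with the moment factor kept $\pr$-a.s.\ bounded via the strong law of large numbers, the same $\omega$-wise invocation of Assumption \ref{basic assumption}(d), and the same reduction of (ii)--(iv) to Theorem \ref{main theorem} together with the Marcinkiewicz--Zygmund strong law and the CLT with Slutsky. The only (harmless) additions are your explicit treatment of the degenerate case $s=0$ and of the event $\{\widehat s_{u_n}>0\}$, which the paper passes over silently.
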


Note that the measurability assumption on ${\cal R}_\rho(\widehat\mu_{u_n}^{\,*n})$ in Theorem \ref{main theorem - EPiE} is not very restrictive. For instance, when $\rho$ is the Value at Risk or a distortion risk measure (see Sections \ref{Sec VaR}--\ref{Sec DRM} for details), then it can be easily seen that ${\cal R}_\rho(\widehat\mu_{u_n}^{\,*n})$ is $({\cal F},{\cal B}(\R))$-measurable. Moreover, the measurability also holds for any law-invariant coherent risk measure $\rho$ which is defined on $L^p$ for some $p\in[1,\infty)$:

\begin{remarknorm}\label{remark on measurability}
Let ${\cal X}=L^p$ for some $p\in[1,\infty)$. Then for every law-invariant coherent risk measure $\rho:L^p\rightarrow\R$ the estimator ${\cal R}_\rho(\widehat\mu_{u_n}^{\,*n})$ is $({\cal F},{\cal B}(\R))$-measurable for every $n\in\N$. See
Section \ref{proof of remark on measurability} for a verifications.
{\hspace*{\fill}$\Diamond$\par\bigskip}
\end{remarknorm}

\begin{remarknorm}\label{remark on representation}
Note that the considered estimators for the individual premium have the following representations:
\begin{eqnarray}
    \frac{1}{n}{\cal R}_\rho({\cal N}_{n\widehat m_{u_n},\,n\widehat s_{u_n}^2}) & = & \widehat m_{u_n}+\frac{1}{\sqrt{n}}\,\widehat s_{u_n}{\cal R}_\rho({\cal N}_{0,1}),\label{remark on representation - 10}\\
    \frac{1}{n}{\cal R}_\rho(\widehat\mu_{u_n}^{\,*n}) & = & \widehat m_{u_n}+\frac{1}{\sqrt{n}}\,\widehat s_{u_n}{\cal R}_\rho({\cal N}_{0,1})+{\cal O}_{\mbox{\rm \scriptsize{$\pr$-a.s.}}}(n^{-1/2-\gamma \beta}).\label{remark on representation - 20}
\end{eqnarray}
Equation (\ref{remark on representation - 10}) is a simple consequence of part (c) of Assumption \ref{basic assumption}, and (\ref{remark on representation - 20}) follows from (\ref{remark on representation - 10}) and part (i) of Theorem
\ref{main theorem - EPiE}.
{\hspace*{\fill}$\Diamond$\par\bigskip}
\end{remarknorm}


\section{Illustration of Assumption \ref{basic assumption}}\label{Illustration of Assumption}


\subsection{Value at Risk}\label{Sec VaR}

The {\em Value at Risk} at level $\alpha\in(0,1)$ is the map $\vatr_\alpha:L^0\rightarrow\R$ defined by
$$
    \vatr_\alpha(X)\,:=\,F_X^{\leftarrow}(\alpha)\,:=\,\inf\{x\in\R:\,F_X(x)\ge\alpha\}.
$$
It is clearly law-invariant and easily seen to be monotone, cash additive, and positively homogeneous. Moreover, ${\cal M}_1^\lambda\subset{\cal M}(L^0)$ trivially holds for every $\lambda\ge 0$. In particular, it satisfies condition (c) of
Assumption \ref{basic assumption}. It follows from Theorem 2 in \cite{Zaehle2011} that $\vatr_\alpha$ also satisfies condition (d) of Assumption \ref{basic assumption} for $\beta=1$ and every $\lambda\ge 0$.

%
\subsection{Coherent distortion risk measure}\label{Sec DRM}

Let $g:[0,1]\to[0,1]$ be a convex distortion function, i.e.\ a convex and nondecreasing function with $g(0)=0$ and $g(1)=1$. Note that the function $g$ is continuous on $[0,1)$ and might jump at $1$. The {\em distortion risk measure} associated with $g$ is defined by
\begin{equation}\label{Def DR Measure}
    \rho_g(X)\,:=\,-\int_{-\infty}^0 g(F_X(x))\,dx+\int_0^\infty \big(1-g(F_X(x))\big)\,dx
\end{equation}
for every real-valued random variable $X$ (on some given atomless probability space) satisfying $\int_0^\infty (1-g(F_{|X|}(x))dx<\infty$, where $F_X$ and $F_{|X|}$ denote the distribution functions of $X$ and $|X|$, respectively. The set ${\cal X}_g\subset L^0$ of all such random variables forms a linear subspace of $L^1$; this follows from \cite[Proposition 9.5]{Denneberg1994} and \cite[Proposition 4.75]{FoellmerSchied2011}. It is known that $\rho_g$ is a law-invariant coherent risk measure; see, for instance, \cite{WangDhaene1998}.

\begin{lemma}\label{prop on drm}
Let $\rho_g:{\cal X}_g\rightarrow\R$ be the distortion risk measure associated with a convex distortion function $g$. Assume that there exist constants $L,\beta>0$ such that
\begin{equation}\label{prop on drm - eq}
    1-g(t)\le L (1-t)^\beta\quad\mbox{ for all }t\in[0,1].
\end{equation}
Then $\rho_g$ satisfies conditions (c)--(d) of Assumption \ref{basic assumption} for this $\beta$ and every $\lambda>0$ with $\lambda \beta>1$.
\end{lemma}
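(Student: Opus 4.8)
The plan is to verify conditions (c) and (d) of Assumption \ref{basic assumption} separately, the former being essentially routine and the latter requiring the tail estimate (\ref{prop on drm - eq}).

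For condition (c), since $\rho_g$ is a law-invariant coherent risk measure it is in particular cash additive and positively homogeneous (the latter two being part of coherence), so only the inclusion ${\cal M}_1^\lambda\subset{\cal M}({\cal X}_g)={\cal M}_1\cap\{\mu:\rho_g<\infty\ \mbox{on }{\cal X}_g\}$ remains. Here I would take any $\mu\in{\cal M}_1^\lambda$, i.e.\ with $d_{\phi_\lambda}(\mu,\delta_0)<\infty$, and show the defining integrability condition $\int_0^\infty(1-g(F_{|X_\mu|}(x)))\,dx<\infty$. Using (\ref{prop on drm - eq}) we bound $1-g(F_{|X_\mu|}(x))\le L(1-F_{|X_\mu|}(x))^\beta=L\,\pr(|X_\mu|>x)^\beta$, and $d_{\phi_\lambda}(\mu,\delta_0)<\infty$ forces $\pr(|X_\mu|>x)\le \pr(X_\mu<-x)+\pr(X_\mu>x)\le 2\,d_{\phi_\lambda}(\mu,\delta_0)\,\phi_\lambda(x)^{-1}=O(x^{-\lambda})$, so the integrand is $O(x^{-\lambda\beta})$, which is integrable on $[1,\infty)$ precisely because $\lambda\beta>1$.

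For condition (d) I would start from the well-known representation of the distortion risk measure as a weighted integral of the quantile function: writing $F_\mu^\leftarrow$ for the (left-continuous) quantile function, one has $\rho_g(X_\mu)=\int_{(0,1)}F_\mu^\leftarrow(t)\,d\,(-g(1-t))$, or equivalently, after integration by parts / Fubini, in a form $\rho_g(X_\mu)={\cal R}_{\rho_g}(\mu)=\int_{\R}(\eins_{(0,\infty)}(x)-g(F_\mu(x)))\,dx$ as given in (\ref{Def DR Measure}). Given a sequence $(\mathfrak m_n)\subset{\cal M}_1^\lambda$ with $d_n:=d_{\phi_\lambda}(\mathfrak m_n,{\cal N}_{0,1})\to0$, I want to estimate $|{\cal R}_{\rho_g}(\mathfrak m_n)-{\cal R}_{\rho_g}({\cal N}_{0,1})|\le\int_\R|g(F_{\mathfrak m_n}(x))-g(\Phi_{0,1}(x))|\,dx$. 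The strategy is to split the line into a central block $|x|\le K_n$ and the two tails $|x|>K_n$ for a threshold $K_n\to\infty$ to be optimized, and to pay a Hölder-type penalty for passing the absolute value through $g$.

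\medskip

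\noindent\emph{Central block.} On $|x|\le K_n$, since $g$ is convex with $g(0)=0,g(1)=1$ it is $1$-Lipschitz on compact subsets of $[0,1)$ only away from $1$; to avoid the jump at $1$ I instead use the crude bound $|g(a)-g(b)|\le 1$ combined with the fact that, where both arguments are bounded away from $1$, $g$ is Lipschitz — but the clean way is to use the tail bound (\ref{prop on drm - eq}) once more to control the modulus of continuity of $g$ near $1$. Concretely, for $a\le b$ in $[0,1]$ one has $|g(b)-g(a)|\le (1-g(a))-(1-g(b))\le L(1-a)^\beta$, and also trivially $|g(b)-g(a)|\le g(b)-g(a)\le$ (Lipschitz bound on $[0,1-\delta]$) $\cdot|b-a|$ when $a,b\le 1-\delta$. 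Using $|F_{\mathfrak m_n}(x)-\Phi_{0,1}(x)|\le d_n\phi_\lambda(x)^{-1}\le d_n$, on the central block I can bound the integrand by $C\,d_n^{\beta'}$ for a suitable exponent $\beta'\in(0,1]$ (a Hölder exponent coming from $g$), giving a central contribution of order $K_n\,d_n^{\beta'}$; alternatively, exploiting $\phi_\lambda(x)^{-1}$, of order $d_n^{\beta'}\int_{|x|\le K_n}\phi_\lambda(x)^{-\beta'}\,dx$.

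\medskip

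\noindent\emph{Tails.} On $x>K_n$, bound $|g(F_{\mathfrak m_n}(x))-g(\Phi_{0,1}(x))|\le (1-g(F_{\mathfrak m_n}(x)))+(1-g(\Phi_{0,1}(x)))\le L(1-F_{\mathfrak m_n}(x))^\beta+L(1-\Phi_{0,1}(x))^\beta$, and estimate $1-F_{\mathfrak m_n}(x)\le (1-\Phi_{0,1}(x))+d_n\phi_\lambda(x)^{-1}\le C\phi_\lambda(x)^{-1}$ (for $n$ large, using $d_n\le 1$ and that the normal tail is $O(x^{-\lambda})$), so the tail integrand is $O(x^{-\lambda\beta})$ and the tail contribution is $O(K_n^{1-\lambda\beta})$; the symmetric estimate handles $x<-K_n$ via $F$ directly.

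\medskip

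Balancing $K_n\,d_n^{\beta'}$ against $K_n^{1-\lambda\beta}$ by choosing $K_n=d_n^{-\beta'/(\lambda\beta)}$ yields a bound of the form $C\,d_n^{\beta''}$ for an explicit exponent $\beta''>0$ depending only on $\beta,\beta',\lambda$; renaming this exponent $\beta$ (as allowed by the statement, which only asserts existence of \emph{some} $C,\beta>0$) completes the verification of (d).

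\medskip

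\noindent\textbf{Main obstacle.} The delicate point is handling the possible jump of $g$ at $t=1$ and, more generally, the lack of a global Lipschitz bound for $g$ near $1$: one cannot simply pull the absolute value through $g$ at Lipschitz cost. The hypothesis (\ref{prop on drm - eq}) is exactly designed to quantify the behaviour of $g$ near $1$, and the crux of the proof is to use it in two roles simultaneously — to bound the far tails of the distorted integrals, and to control the modulus of continuity of $g$ near $1$ on the central block — and then to choose the cutoff $K_n$ so that both error terms vanish at a common polynomial rate in $d_{\phi_\lambda}(\mathfrak m_n,{\cal N}_{0,1})$. Care is also needed that all constants are uniform in $n$, which they are since they depend only on $L,\beta,\lambda$ and on fixed moments of ${\cal N}_{0,1}$.
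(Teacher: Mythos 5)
Your verification of condition (c) matches the paper's (implicit) argument and is fine. For condition (d), however, there is a gap, and it is precisely at the point you yourself flag as ``the crux''. The single observation that the paper's proof rests on is that the hypothesis (\ref{prop on drm - eq}) \emph{together with the convexity of $g$} yields a \emph{global} H\"older bound with the same exponent: for $0\le t<t'\le 1$ the secant slope of the convex function $g$ over $[t,t']$ is dominated by its secant slope over $[1-(t'-t),1]$, whence
\begin{equation*}
g(t')-g(t)\;\le\;1-g\bigl(1-(t'-t)\bigr)\;\le\;L\,(t'-t)^{\beta}\qquad\mbox{for all }0\le t<t'\le1 .
\end{equation*}
(In particular $g$ cannot jump at $1$ under (\ref{prop on drm - eq}), so that worry is moot.) With this bound one simply writes $|{\cal R}_{\rho_g}(\mathfrak m_n)-{\cal R}_{\rho_g}({\cal N}_{0,1})|\le\int_\R|g(F_{\mathfrak m_n})-g(\Phi_{0,1})|\,dx\le L\,d_{\phi_\lambda}(\mathfrak m_n,{\cal N}_{0,1})^{\beta}\int_\R\phi_\lambda(x)^{-\beta}dx$, the last integral being finite exactly because $\lambda\beta>1$. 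No cutoff $K_n$ and no tail/central splitting are needed.

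Your proposal never establishes the central-block exponent $\beta'$ (it is only asserted to exist ``coming from $g$''), even though the two ingredients you list --- $|g(b)-g(a)|\le L(1-a)^\beta$ near $1$ and local Lipschitzness away from $1$ --- do assemble into the global H\"older bound above if you take $\delta=|b-a|$. More importantly, even if you complete the balancing argument, choosing $K_n=d_n^{-\beta'/(\lambda\beta)}$ gives an exponent $\beta''=\beta'(1-1/(\lambda\beta))$ which is strictly smaller than $\beta$, and your remark that ``the statement only asserts existence of some $C,\beta>0$'' misreads the lemma: it claims condition (d) holds \emph{for this $\beta$}, i.e.\ the same exponent as in (\ref{prop on drm - eq}). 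That is not a cosmetic point, since $\beta$ enters the convergence rates $n^{-1/2-\gamma\beta}$ of Theorems \ref{main theorem} and \ref{main theorem - EPiE}. So as written your argument proves a strictly weaker statement and leaves its key estimate unjustified; replacing the splitting by the global H\"older bound repairs both defects at once.
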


\begin{proof}
The first part of condition (c) is satisfied since $\rho_g$ is a law-invariant coherent risk measure. Condition (\ref{prop on drm - eq}) and the convexity of the distortion function $g$ together imply $|g(t)-g(t')\le L|t-t'|^\beta$ for all
$t,t'\in[0,1]$. In view of (\ref{Def DR Measure}) and the assumption $\lambda \beta>1$, it follows easily that condition (d) and the second part of condition (c) hold too.
\end{proof}

If specifically $g(t)=\frac{1}{1-\alpha}\max\{t-\alpha;0\}$ for any fixed $\alpha\in(0,1)$, then we have ${\cal X}_g=L^1$ and $\rho_g$ is nothing but the {\em Average Value at Risk} $\avatr_\alpha$ at level $\alpha$. In this case, condition
(\ref{prop on drm - eq}) holds for $\beta=1$. That is, $\avatr_\alpha$ satisfies conditions (c)--(d) of Assumption \ref{basic assumption} for every $\lambda>1$.


\subsection{Further coherent risk measures}

Not every law-invariant coherent risk measure $\rho$ can be seen as a distortion risk measure. In particular, Lemma \ref{prop on drm} is not a general device to verify condition (d) of Assumption \ref{basic assumption}. If $\rho$ is not a distortion risk measure, then the following Lemma \ref{prop on Orlicz rm} might help.

\begin{lemma}\label{prop on Orlicz rm}
Let $p\ge 1$. Let $\rho:L^p\rightarrow\R$ be a law-invariant coherent risk measure and
define a function $g_\rho:[0,1]\rightarrow[0,1]$ by $g_\rho(t)\,:=\,1-\rho(B_{1-t})$, where $B_{1-t}$ refers to any Bernoulli random variable with expectation $1-t$. Assume that there exist constants $L,\beta>0$ such that
\begin{equation}\label{prop on Orlicz rm - eq}
    1-g_\rho(t)\le L (1-t)^\beta\quad\mbox{ for all }t\in[0,1].
\end{equation}
Then $\rho_g$ satisfies conditions (c)--(d) of Assumption \ref{basic assumption} for this $\beta$ and every $\lambda>p$ with $\lambda \beta>1$.
\end{lemma}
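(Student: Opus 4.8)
The plan is to reduce the lemma to Lemma \ref{prop on drm} via the Kusuoka-type representation of law-invariant coherent risk measures, exploiting that the hypothesis (\ref{prop on Orlicz rm - eq}) on $g_\rho$ in fact controls \emph{every} convex distortion occurring in that representation. (As in Lemma \ref{prop on drm}, the conclusion is of course about $\rho$ itself, not about an auxiliary $\rho_g$.)

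Condition (c) is easy: a coherent $\rho$ is cash additive and positively homogeneous, so the first part of (c) holds automatically. For the inclusion ${\cal M}_1^\lambda\subset{\cal M}(L^p)={\cal M}({\cal X})$ I would note that $d_{\phi_\lambda}(\mu,\delta_0)<\infty$ forces $1-F_\mu(x)\le d_{\phi_\lambda}(\mu,\delta_0)(1+x^\lambda)^{-1}$ for $x\ge0$ and $F_\mu(x)\le d_{\phi_\lambda}(\mu,\delta_0)(1+|x|^\lambda)^{-1}$ for $x\le0$; since $\lambda>p$, integrating $\int_0^\infty p\,x^{p-1}(1-F_\mu(x))\,dx$ and its negative-part analogue shows that $\mu$ has a finite $p$-th moment, so $\mu\in{\cal M}(L^p)$. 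In particular ${\cal R}_\rho$ is real-valued at every $\mathfrak{m}\in{\cal M}_1^\lambda$, so the second part of (c) is settled as well.

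For condition (d) I would invoke the Kusuoka-type representation (see, e.g., \cite{FoellmerSchied2011}): a finite, law-invariant, coherent risk measure $\rho:L^p\to\R$ on an atomless probability space can be written as $\rho(X)=\sup_{g\in{\cal G}}\rho_g(X)$ for all $X\in L^p$, with ${\cal G}$ a suitable family of convex distortion functions. Evaluating both sides at the Bernoulli variables, and using $\rho_g(B_{1-t})=1-g(t)$ (a one-line computation from (\ref{Def DR Measure})), gives $\rho(B_{1-t})=\sup_{g\in{\cal G}}(1-g(t))$, i.e.\ $g_\rho=\inf_{g\in{\cal G}}g$ pointwise on $[0,1]$. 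Hence the hypothesis (\ref{prop on Orlicz rm - eq}) is inherited by each member of ${\cal G}$: $1-g(t)\le 1-g_\rho(t)\le L(1-t)^\beta$ for all $g\in{\cal G}$ and $t\in[0,1]$, with the \emph{same} constants $L,\beta$. Fixing $\lambda>p$ with $\lambda\beta>1$, every $g\in{\cal G}$ now satisfies the hypotheses of Lemma \ref{prop on drm} with these $L,\beta,\lambda$, and the estimate from its proof yields $|{\cal R}_{\rho_g}(\mathfrak{m}_1)-{\cal R}_{\rho_g}(\mathfrak{m}_2)|\le\int_\R|g(F_{\mathfrak{m}_1}(x))-g(F_{\mathfrak{m}_2}(x))|\,dx\le C\,d_{\phi_\lambda}(\mathfrak{m}_1,\mathfrak{m}_2)^\beta$ for all $\mathfrak{m}_1,\mathfrak{m}_2\in{\cal M}_1^\lambda$, where $C:=L'\int_\R(1+|x|^\lambda)^{-\beta}\,dx<\infty$ (the integral converges because $\lambda\beta>1$) and $L'$ is a $\beta$-Hölder constant of $g$ which, by convexity of $g$ together with $1-g(t)\le L(1-t)^\beta$, depends only on $L$ and $\beta$; in particular $C$ does not depend on $g$.

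It then remains to pass to the supremum: from $|\sup_g a_g-\sup_g b_g|\le\sup_g|a_g-b_g|$ and $\rho=\sup_{g\in{\cal G}}\rho_g$ we obtain $|{\cal R}_\rho(\mathfrak{m}_1)-{\cal R}_\rho(\mathfrak{m}_2)|\le C\,d_{\phi_\lambda}(\mathfrak{m}_1,\mathfrak{m}_2)^\beta$ for all $\mathfrak{m}_1,\mathfrak{m}_2\in{\cal M}_1^\lambda$; taking $\mathfrak{m}_2={\cal N}_{0,1}$ gives (d). The hard part will be the Kusuoka step: one must be sure the representation $\rho=\sup_{g\in{\cal G}}\rho_g$ is genuinely available in the $L^p$-framework with convex-distortion summands — this rests on the fact that a finite-valued law-invariant coherent risk measure on an atomless $L^p$ has the Fatou property and hence such a representation — while everything afterwards is just Lemma \ref{prop on drm} made uniform in $g$. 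If one prefers to avoid quoting Kusuoka, one can instead start from the dual representation $\rho(X)=\sup_Q\E_Q[X]$, use (rearrangement-invariance and) law-invariance to replace $\E_Q[X]$ by the comonotone rearrangement $\int_0^1F_X^{\leftarrow}(u)\,h_Q(u)\,du$ with $h_Q$ a nondecreasing probability density on $(0,1)$, note that this functional equals $\rho_{g_Q}$ for the convex distortion $g_Q$ with $g_Q'=h_Q$, and proceed exactly as above.
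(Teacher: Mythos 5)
Your proposal is correct and follows the same architecture as the paper's own proof: both rest on a representation $\rho=\sup_{g\in{\cal G}}\rho_g$ over a family of convex distortion functions (the paper gets this from Proposition 5.1 and Remark 3.2 of Belomestny--Kr\"atschmer rather than from a general Kusuoka-type argument, but it is the same external input, and your self-flagged ``hard part'' is no harder than what the paper itself delegates to a citation), both observe that $\rho_g(B_{1-t})=1-g(t)$ forces $g_\rho=\inf_{g\in{\cal G}}g$ so that each $g$ inherits the bound $1-g(t)\le L(1-t)^\beta$, and both conclude via
$|{\cal R}_\rho(\mathfrak{m}_n)-{\cal R}_\rho({\cal N}_{0,1})|\le\sup_{g}\int|g(F_{\mathfrak{m}_n}(x))-g(F_{{\cal N}_{0,1}}(x))|\,dx\le L'\Big(\int\phi_\lambda(x)^{-\beta}dx\Big)\,d_{\phi_\lambda}(\mathfrak{m}_n,{\cal N}_{0,1})^\beta$,
the integral being finite because $\lambda\beta>1$. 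The one point where you genuinely diverge is how the uniform $\beta$-H\"older bound for the members of ${\cal G}$ is obtained: you deduce it from convexity of $g$ plus the inherited tail bound (exactly the implication already asserted in the proof of Lemma \ref{prop on drm}), whereas the paper re-derives it by a subadditivity argument with Bernoulli decompositions $B_{1-t}=B_{1-t'}+B_{t'-t}$, yielding $g(t')-g(t)\le\rho_g(B_{t'-t})\le\rho(B_{t'-t})\le L(t'-t)^\beta$ and then using continuity of $g$ at $1$. Both sub-arguments are valid (your convexity route gives a constant like $2^\beta L$ instead of $L$, which is immaterial, and it handles $t'=1$ without a separate continuity appeal); your treatment of condition (c) is simply a fleshed-out version of what the paper asserts in one line, and you correctly note that the ``$\rho_g$'' in the statement of the conclusion should read ``$\rho$''.
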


\begin{proof}
The assumption $\lambda>p$ ensures ${\cal M}_1^\lambda\subset{\cal M}(L^p)$, so that condition (c) is satisfied. Since $\rho$ is defined on $L^p$, we can find a set ${\cal G}_\rho$ of continuous convex distortion functions such that
$g_\rho=\inf_{g\in{\cal G}_\rho}g$ and
\begin{equation}\label{prop on Orlicz rm - PROOF - 10}
    \rho(X)=\sup_{g\in{\cal G}_\rho}\rho_g(X)\quad\mbox{ for all }X\in L^p.
\end{equation}
This follows from Proposition 5.1 and Remark 3.2 in \cite{BelomestnyKraetschmer2012} (adapted to our definition of monotonicity and cash additivity); see also \cite{KraetschmerZaehle2011,Kraetschmeretal2015}. Below we will show that (\ref{prop on Orlicz rm - eq}) implies
\begin{equation}\label{prop on Orlicz rm - PROOF - 20}
    |g(t)-g(t')|\le L |t-t'|^\beta\quad\mbox{ for all }t,t'\in[0,1]\mbox{ and }g\in{\cal G}_\rho.
\end{equation}
With the help of (\ref{prop on Orlicz rm - PROOF - 10}) and (\ref{prop on Orlicz rm - PROOF - 20}) we then obtain
\begin{eqnarray*}
    |{\cal R}_\rho(\mathfrak{m}_n)-{\cal R}_\rho({\cal N}_{0,1})|
    & = & \Big|\sup_{g\in{\cal G}_\rho}{\cal R}_{\rho_g}(\mathfrak{m}_n)-\sup_{g\in{\cal G}_\rho}{\cal R}_{\rho_g}({\cal N}_{0,1})\Big|\\
    & \le & \sup_{g\in{\cal G}_\rho}\big|{\cal R}_{\rho_g}(\mathfrak{m}_n)-{\cal R}_{\rho_g}({\cal N}_{0,1})\big|\\
    & \le & \sup_{g\in{\cal G}_\rho}\int_{-\infty}^\infty\big|g(F_{\mathfrak{m}_n}(x))-g(F_{{\cal N}_{0,1}}(x)\big|\,dx\\
    & \le & \int_{-\infty}^\infty L|F_{\mathfrak{m}_n}(x)-F_{{\cal N}_{0,1}}(x)|^\beta\,dx\\
    & \le & C\,d_{\phi_\lambda}(\mathfrak{m}_n,{\cal N}_{0,1})^\beta
\end{eqnarray*}
for the constant $C:=L\int_{-\infty}^\infty 1/\phi_\lambda(x)^\beta dx$ (which is finite due to the assumption $\lambda \beta>1$). That is, condition (d) is satisfied too.

It remains to show (\ref{prop on Orlicz rm - PROOF - 20}), for which we will adapt the arguments of Section 4.3 in \cite{KraetschmerZaehle2011}. Let $0\le t<t'<1$. Since the underlying probability space was assumed to be atomless, we may pick a measurable decomposition $A_1\cup A_2\cup A_3$ of the probability domain such that ${\rm P}[A_1]=1-t'$, ${\rm P}[A_2]=t'-t$ and ${\rm P}[A_3]=t$, where ${\rm P}$ refers to the corresponding probability measure. Define random variables $B_{1-t'}:=\eins_{A_1}$, $B_{1-t}:=\eins_{A_1\cup A_2}$ and $B_{t'-t}:=\eins_{A_2}$, and note that they are distributed according to the Bernoulli distribution with parameters $1-t'$, $1-t$ and $t'-t$, respectively. Moreover we clearly have $B_{1-t}=B_{1-t'}+B_{t'-t}$. By the subadditivity of $\rho_g$ we can conclude $\rho(B_{1-t})\le\rho(B_{1-t'})+\rho(B_{t'-t})$, and so
\begin{eqnarray*}
    g(t') - g(t) & = & 1-\rho_g(B_{1-t'}) - (1-\rho_g(B_{1-t}))\\
    & \le & \rho_g(B_{t'-t})\\
    & \le & \rho(B_{t'-t})\\
    & \le & \sup_{u\in(0,1]}\frac{\rho(B_{u})}{u^{\beta}}\,(t'-t)^{\beta}\\
    & \le & \sup_{u\in(0,1]}\frac{1-g_\rho(1-u)}{u^{\beta}}\,(t'-t)^{\beta}\\
    & \le & \sup_{v\in[0,1)}\frac{1-g_\rho(v)}{(1-v)^{\beta}}\,(t'-t)^{\beta}
\end{eqnarray*}
for every $g\in{\cal G}_\rho$, where the second ``$\le$'' is ensured by (\ref{prop on Orlicz rm - PROOF - 10}). By (\ref{prop on Orlicz rm - eq}) the constant $\sup_{v\in[0,1)}\frac{1-g_\rho(B_{v})}{(1-v)^{\beta}}$ is finite. Thus, since every $g\in{\cal G}_\rho$ is also continuous at $1$, condition (\ref{prop on Orlicz rm - eq}) indeed implies (\ref{prop on Orlicz rm - PROOF - 20}).
\end{proof}

It is worth mentioning that if $\rho$ is a distortion risk measure with distortion function $g$, then $g_\rho=g$ and condition (\ref{prop on Orlicz rm - eq}) boils down to condition (\ref{prop on drm - eq}). Here are two examples for law-invariant coherent risk measures on Orlicz hearts that are {\em not} distortion risk measures:

\begin{examplenorm}\label{example osmrm}
Given $a\in(0,1]$ and $p\in[1,\infty)$, the {\em one-sided $p$th moment risk measure} is the map $\rho:L^p\rightarrow\R$ defined by
\begin{eqnarray*}
     \rho(X)\,:=\,{\rm E}[X]+a{\rm E}[((X-{\rm E}[X])^{+})^p]^{1/p},
\end{eqnarray*}
where ${\rm E}$ refers to the expectation w.r.t.\ the probability measure of the basic probability space. The map $\rho$ is clearly law-invariant and can easily be shown to be a coherent risk measure. But by Lemma A.5 in \cite{KraetschmerZaehle2011} it is not a distortion risk measure.

The function $g_\rho$ defined in Lemma \ref{prop on Orlicz rm} is given by $g_{\rho}(t)=t-at(1-t)^{1/p}$, and thus
$$
    1-g_{\rho}(t)\,=\,1-t+at(1-t)^{1/p}\,\le\,(1+a)(1-t)^{1/p}\quad\mbox{ for all }t\in[0,1].
$$
Therefore condition (\ref{prop on Orlicz rm - eq}) is satisfied for $\beta:=1/p$.
{\hspace*{\fill}$\Diamond$\par\bigskip}
\end{examplenorm}

\begin{examplenorm}\label{example rmboe}
In \cite{Bellinietal2014} it has been pointed out that expectiles may be viewed as law-invariant coherent risk measures. The {\it expectiles-based risk measure} associated with $\alpha\in [1/2,1)$ is the map $\rho:L^2\rightarrow\R$ defined by
$$
     \rho(X):=\,{\rm argmin}_{m\in\R}\,\big\{\alpha\|(X-m)^{+}\|_{2}^{2}+(1-\alpha)\|(m-X)^{+}\|_{2}^{2}\big\}.
$$
It follows from Theorem 8 in \cite{Delbaen2013} that $\rho$ is not a distortion risk measure unless $\alpha=1/2$.

The function $g_\rho$ defined in Lemma \ref{prop on Orlicz rm} is given by  $g_{\rho}(t)=(1-\alpha)t/(1-\alpha+(1-t)(2\alpha-1))$, and thus
$$
    1-g_{\rho}(t)\,=\,\alpha(1-t)/(1-\alpha+(1-t)(2\alpha-1)))\,\le\,(\alpha/(1-\alpha))(1-t)\quad\mbox{ for all }t\in[0,1].
$$
Therefore condition (\ref{prop on Orlicz rm - eq}) is satisfied for $\beta:=1$.
{\hspace*{\fill}$\Diamond$\par\bigskip}
\end{examplenorm}


\section{Numerical examples}\label{Numerical examples}

In this section we present some numerical examples to illustrate the results of Section \ref{main results}. Our results show that both the estimated normal approximation and the empirical plug-in estimator lead to reasonable estimators for the
premium of an individual risk within a homogeneous insurance collective. Our results also show that these two estimators are asymptotically equivalent. Nevertheless for small to moderate collective sizes $n$ the goodness of the estimators can
vary from case to case. For example, in the case where $\rho$ is the Value at Risk at level $\alpha$ the results of the Theorems \ref{main theorem} and \ref{main theorem - EPiE}  show that for both estimators the estimation error converges almost surely to zero at rate (nearly) $1/2$ when $\ex[|Y_1|^\lambda]<\infty$ for some $\lambda>2$ (where $Y_1$ refers to any $\mu$-distributed random variable). On the other hand, the latter condition does not exclude that $\ex[|Y_1|^{2+\varepsilon}]=\infty$ for some small $\varepsilon>0$. In this case the total claim
distribution can be essentially skewed to the right when the number of individual risks $n$ is small to moderate; cf.\ Figure \ref{fig 1}. So one would expect that especially for heavy-tailed $\mu$ and small to moderate $n$ the estimators perform only moderately well. One would also expect that for heavy-tailed $\mu$ (and even for medium-tailed $\mu$) and small to moderate $n$ the empirical plug-in estimator should outperform the estimated normal approximation. Our goal in this section is to provide empirical evidence for our conjectures.

To this end let us consider a sequence $(Y_i)$ of i.i.d.\ nonnegative random variables on a common probability space with distribution
$$
    \mu = (1-p)\,\delta_{0}+p\,{\rm P}_{a,b}
$$

for some $p\in(0,1)$, where ${\rm P}_{a,b}$ is the Pareto distribution with parameters $a>2$ and $b>0$. The Pareto distribution ${\rm P}_{a,b}$ is determined by the Lebesgue density
$$
    f_{a,b}(x):=ab^{-1}\big(b^{-1}x+1\big)^{-(a+1)}\,\eins_{(0,\infty)}(x), 
$$
and the assumption $a>2$ ensures that $\ex[|Y_1|^\lambda]<\infty$ for all $\lambda\in(2,a)$. We regard $Y_1,\ldots,Y_n$ as a homogeneous insurance collective of size $n$, the number $p$ as the probability for the event of a strictly positive
individual claim amount, and ${\rm P}_{a,b}$ as the individual claim distribution conditioned on this event. Note that in our example the mean $m$ and the variance $s^2$ of $\mu$ are given by
\begin{equation}\label{tatsaechliche parameter 1}
    m=\frac{p\,b}{a-1}\quad\mbox{ and }\quad s^2=\,\frac{2b^2p}{(a-1)(a-2)}-\frac{b^2p^2}{(a-1)^2}\,.
\end{equation}

\begin{figure}[htp]
\begin{center}
\includegraphics[width=5cm]{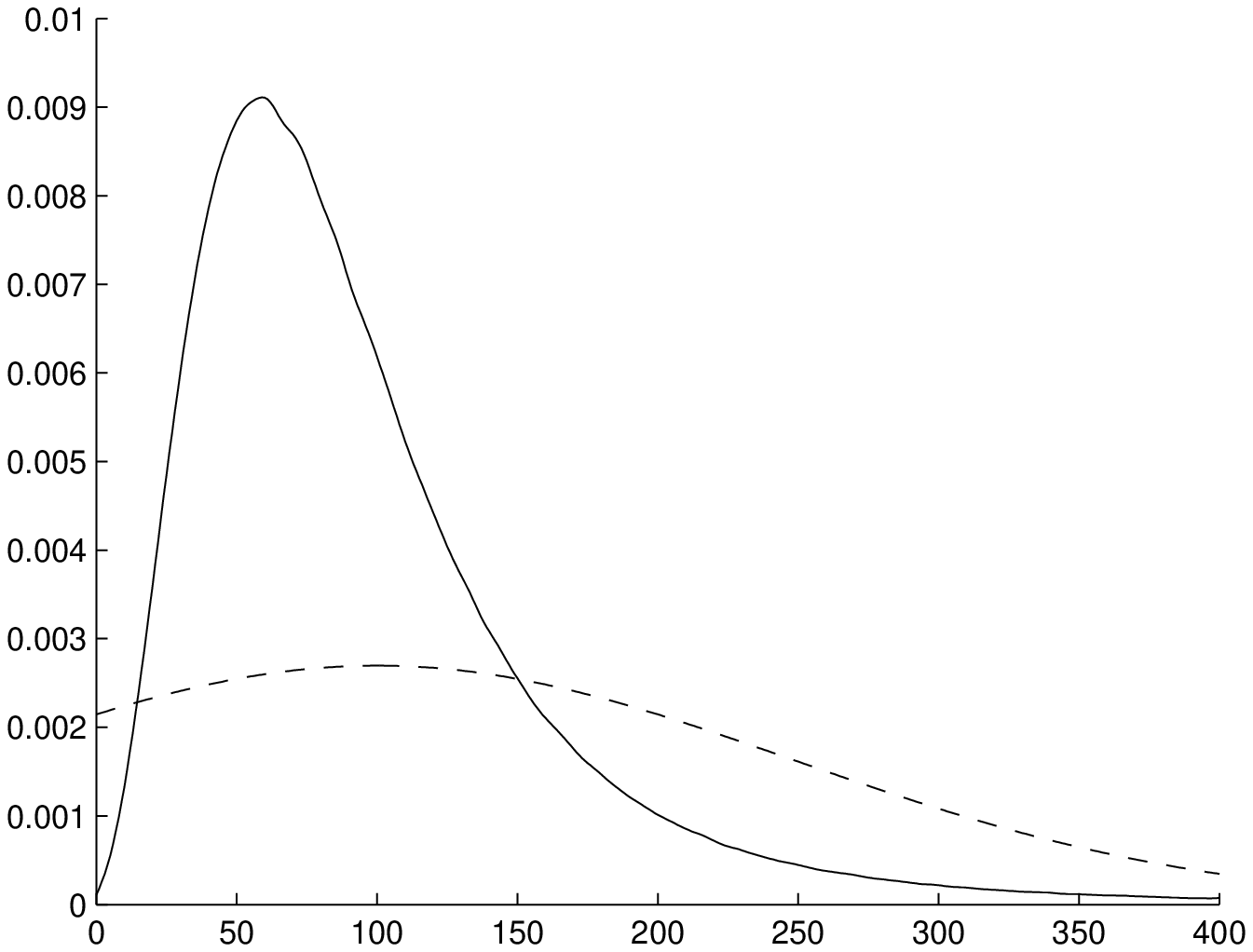}\includegraphics[width=5cm]{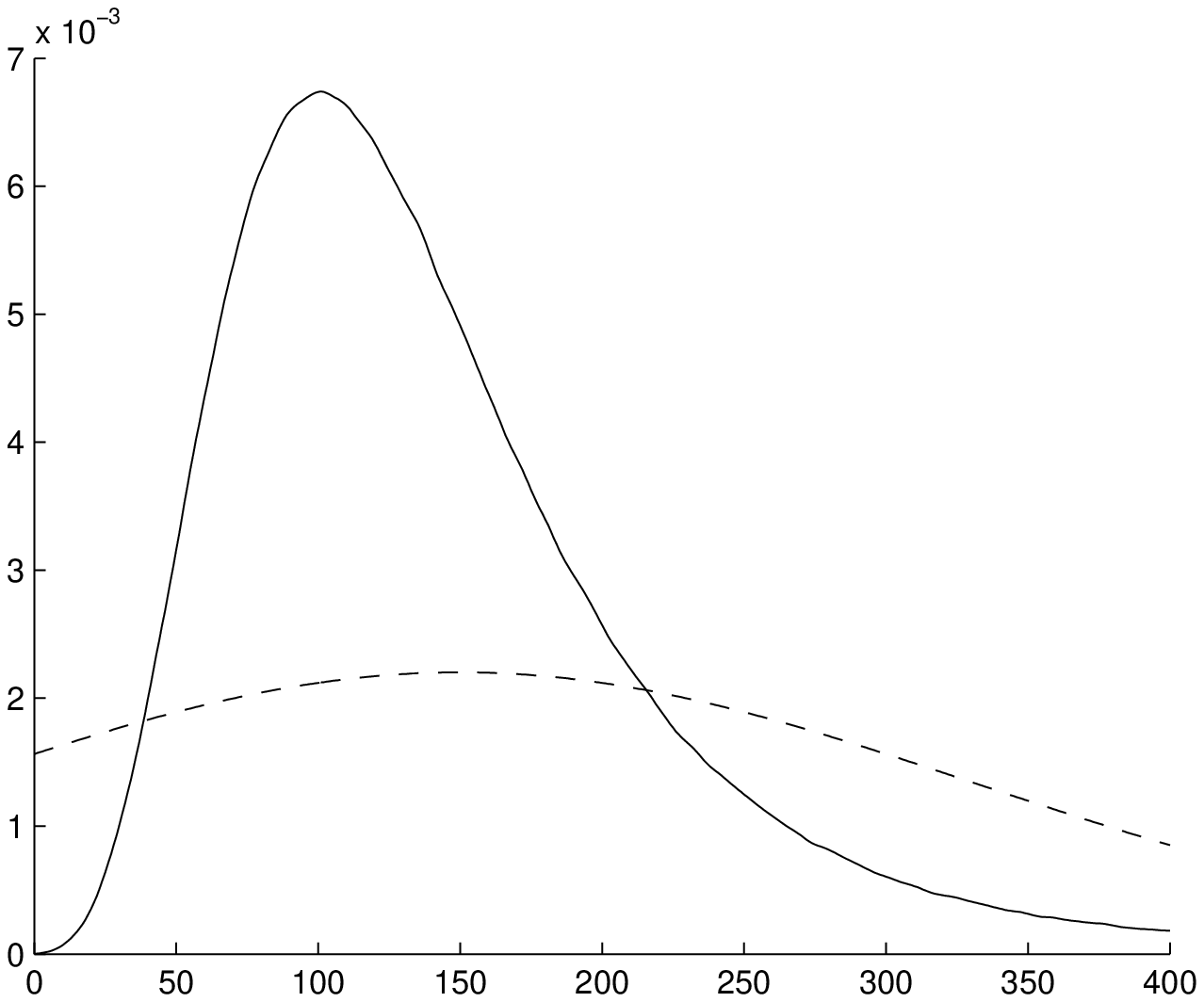}\includegraphics[width=5cm]{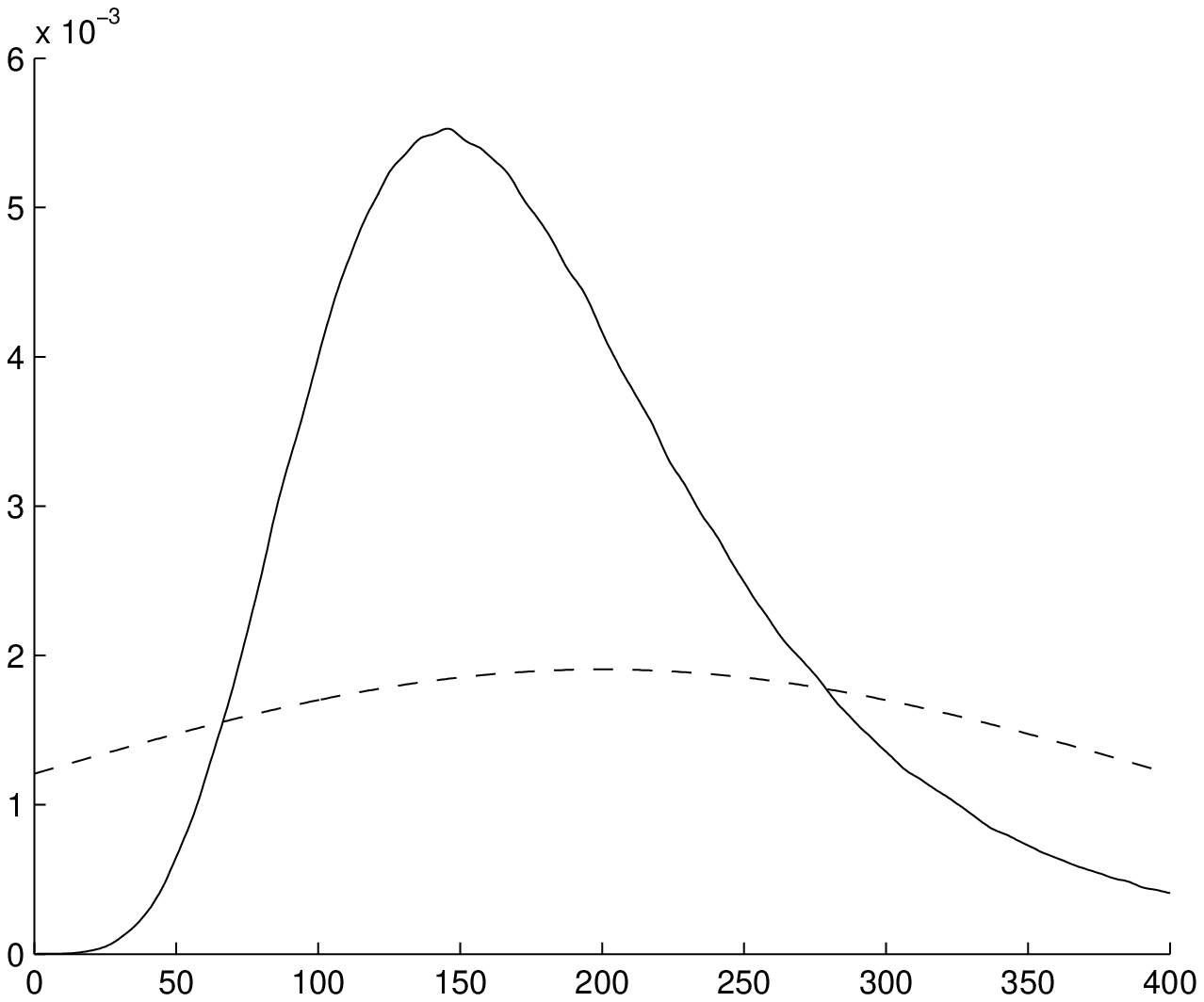}

\includegraphics[width=5cm]{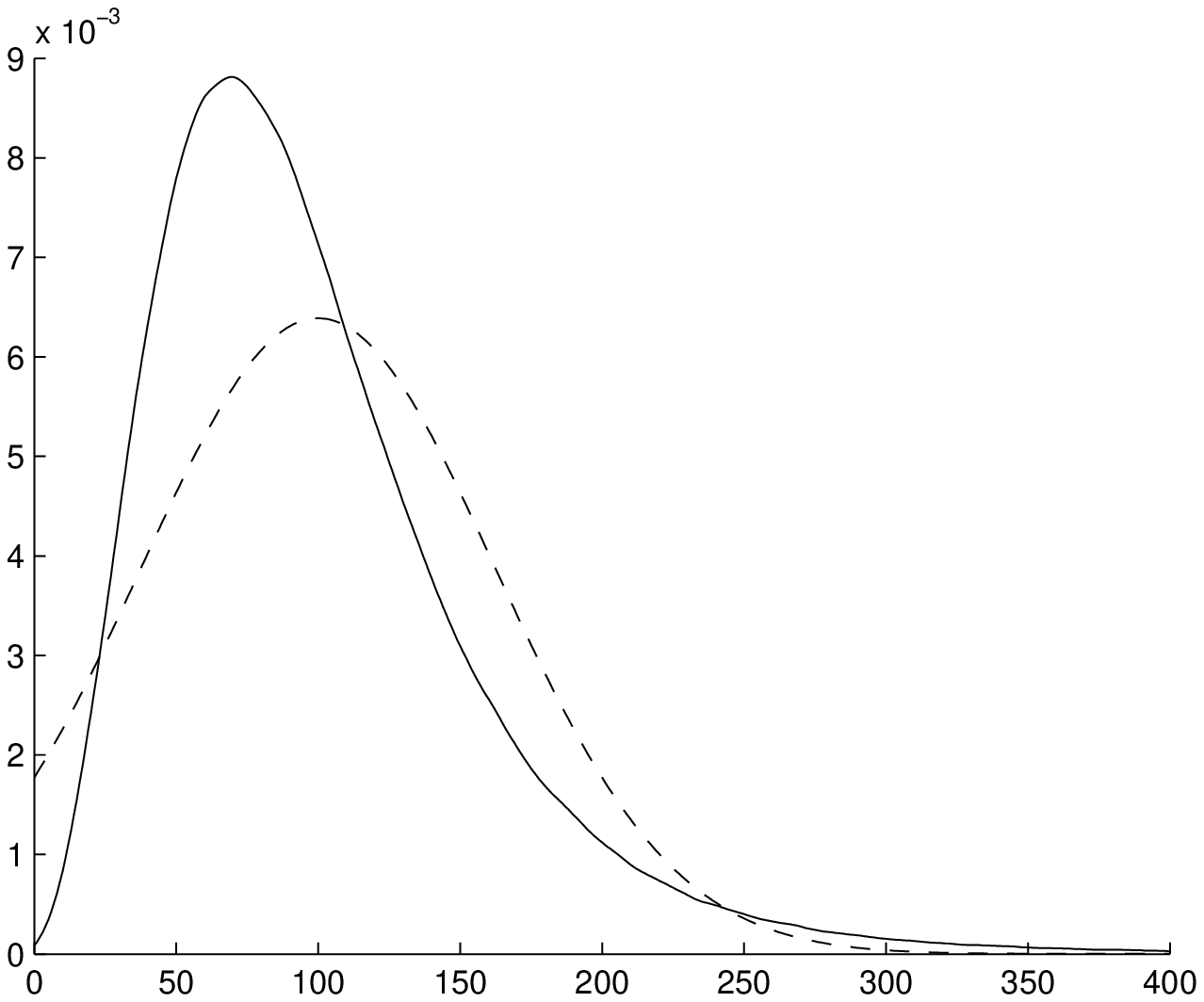}\includegraphics[width=5cm]{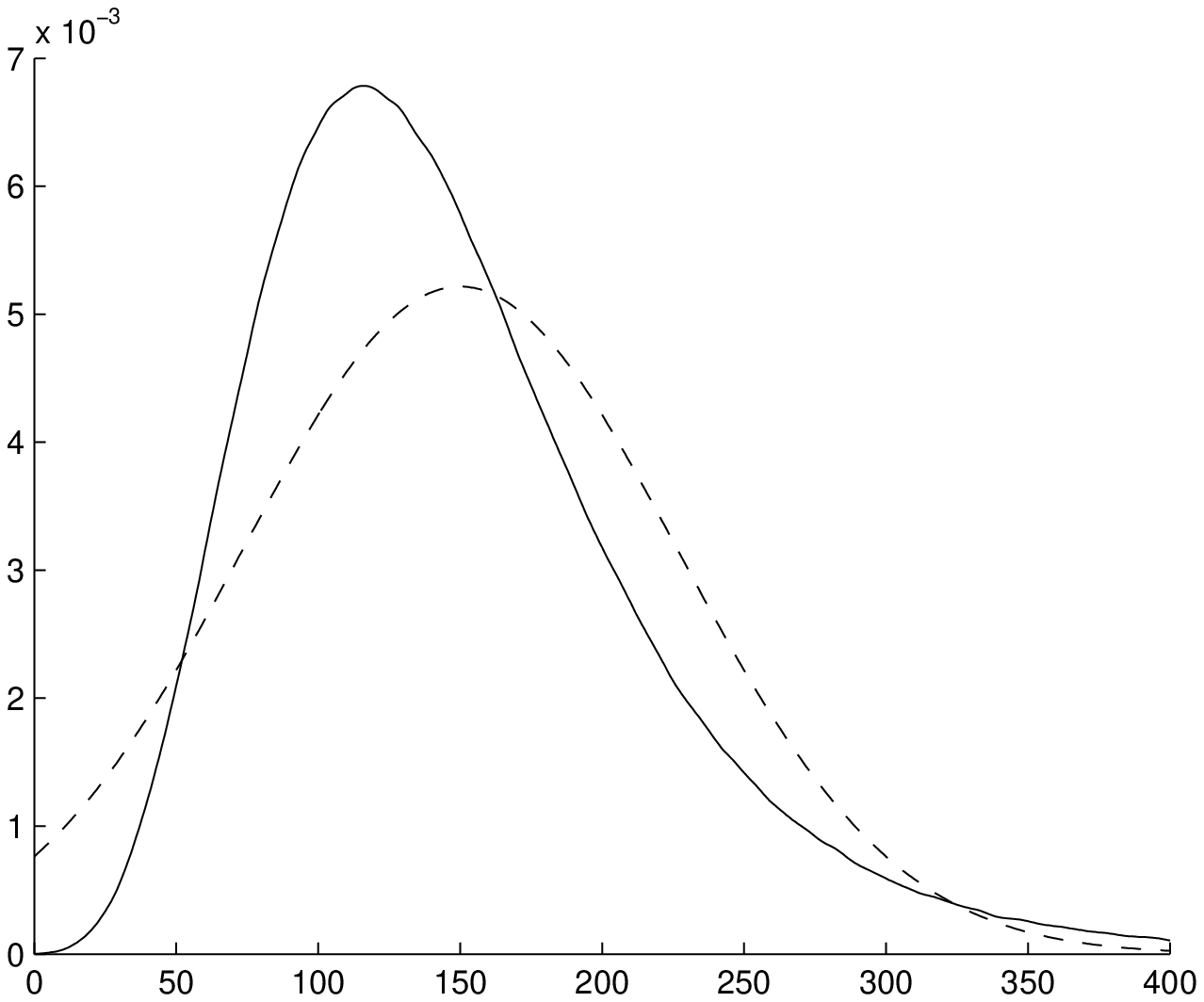}\includegraphics[width=5cm]{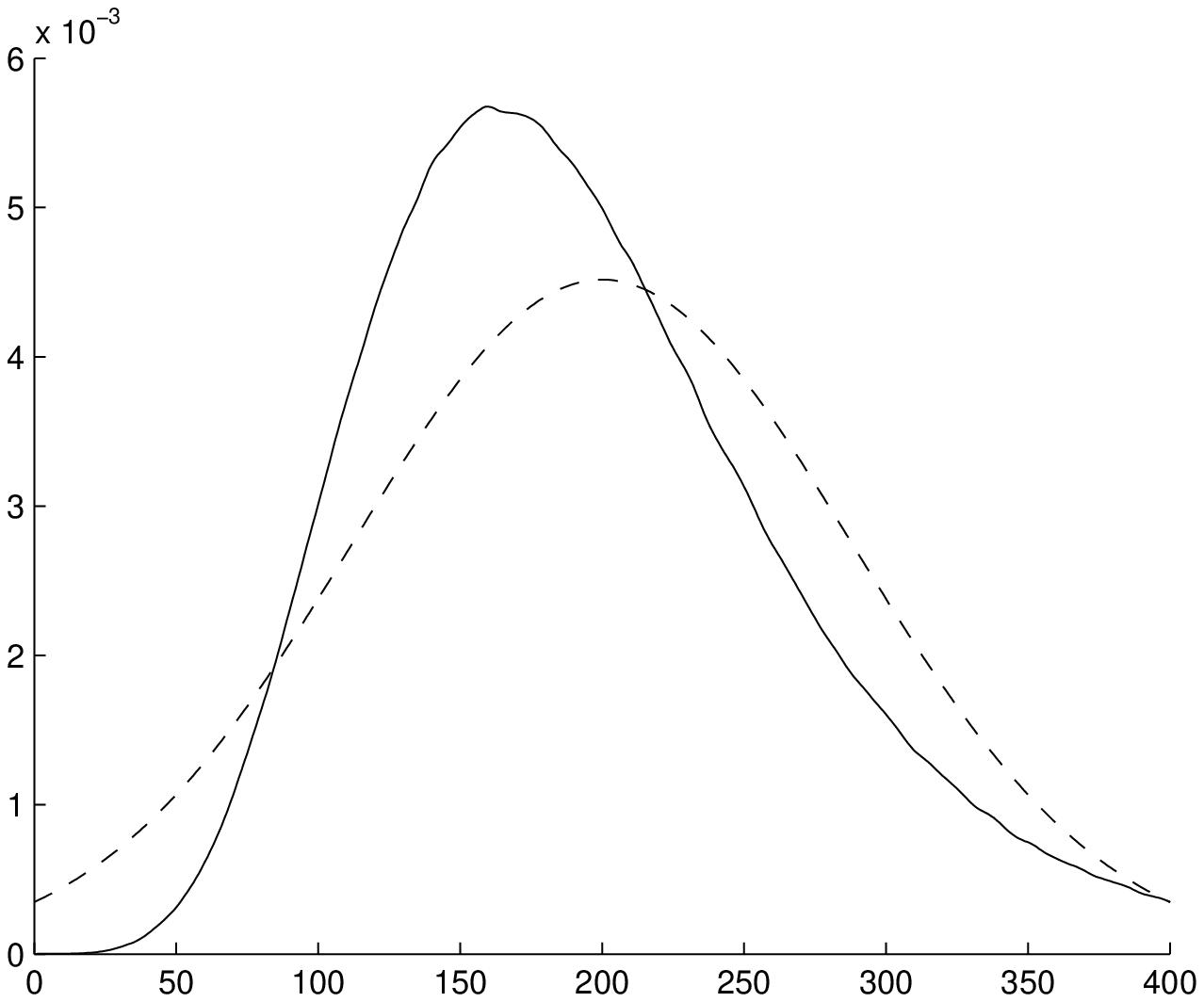}

\includegraphics[width=5cm]{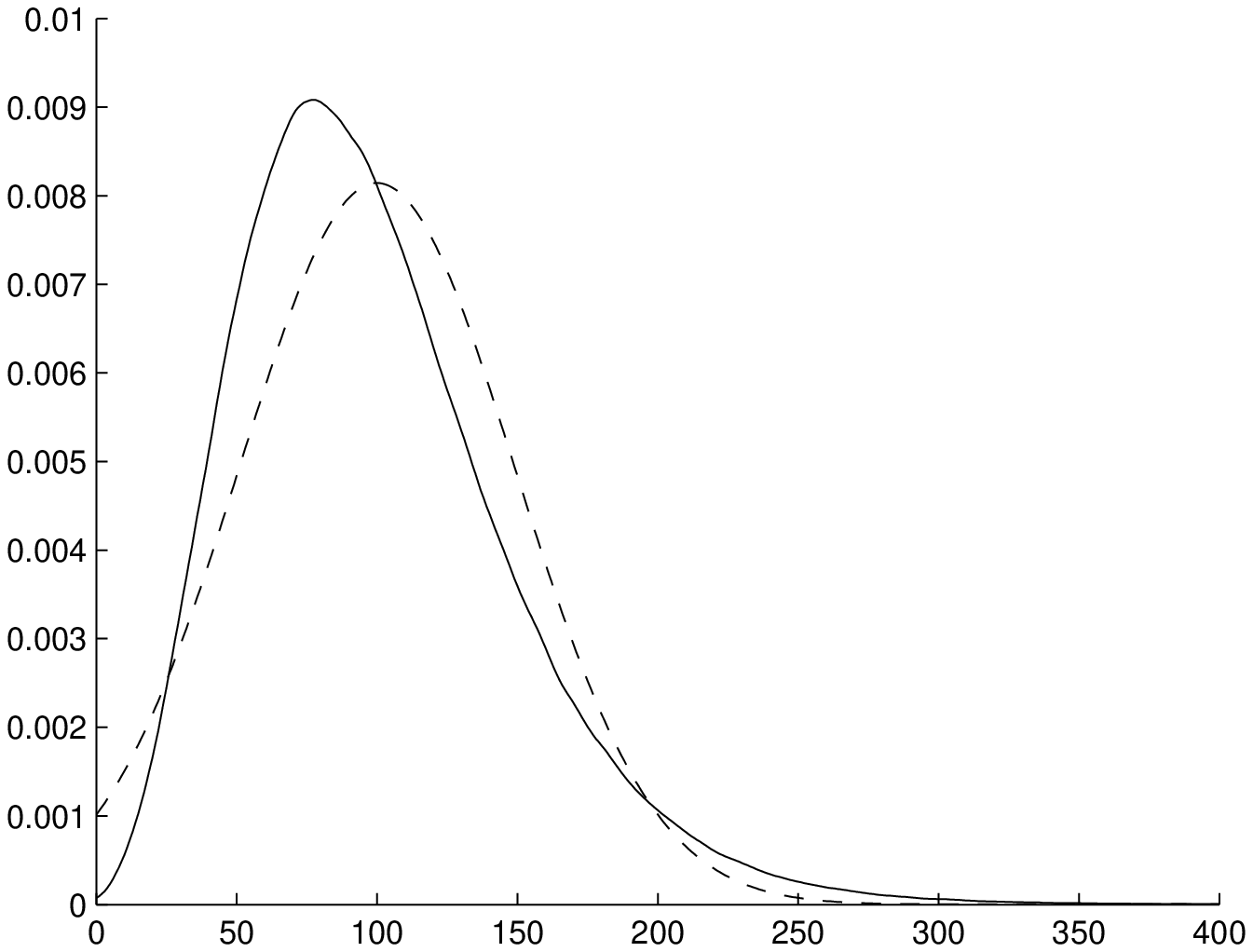}\includegraphics[width=5cm]{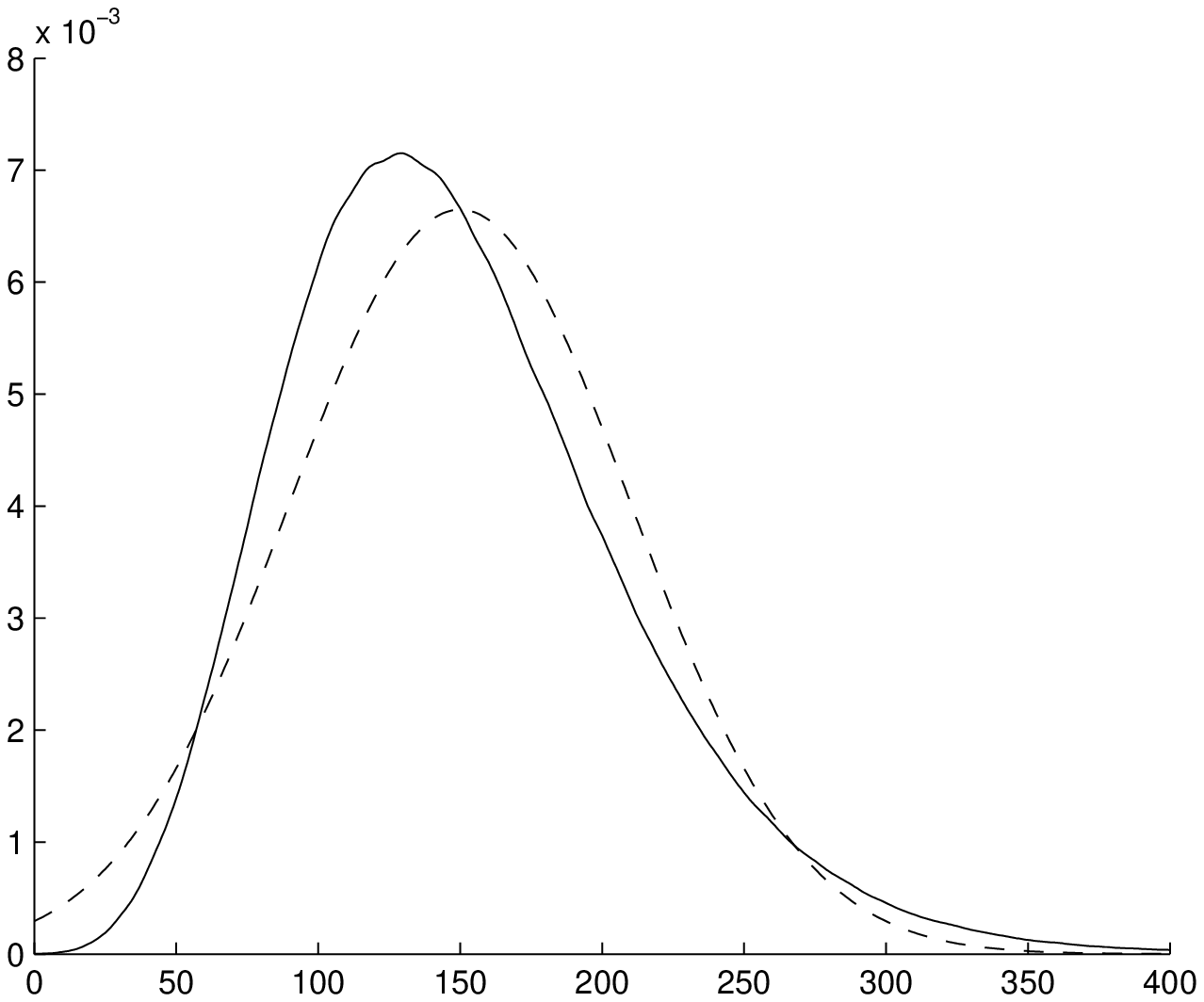}\includegraphics[width=5cm]{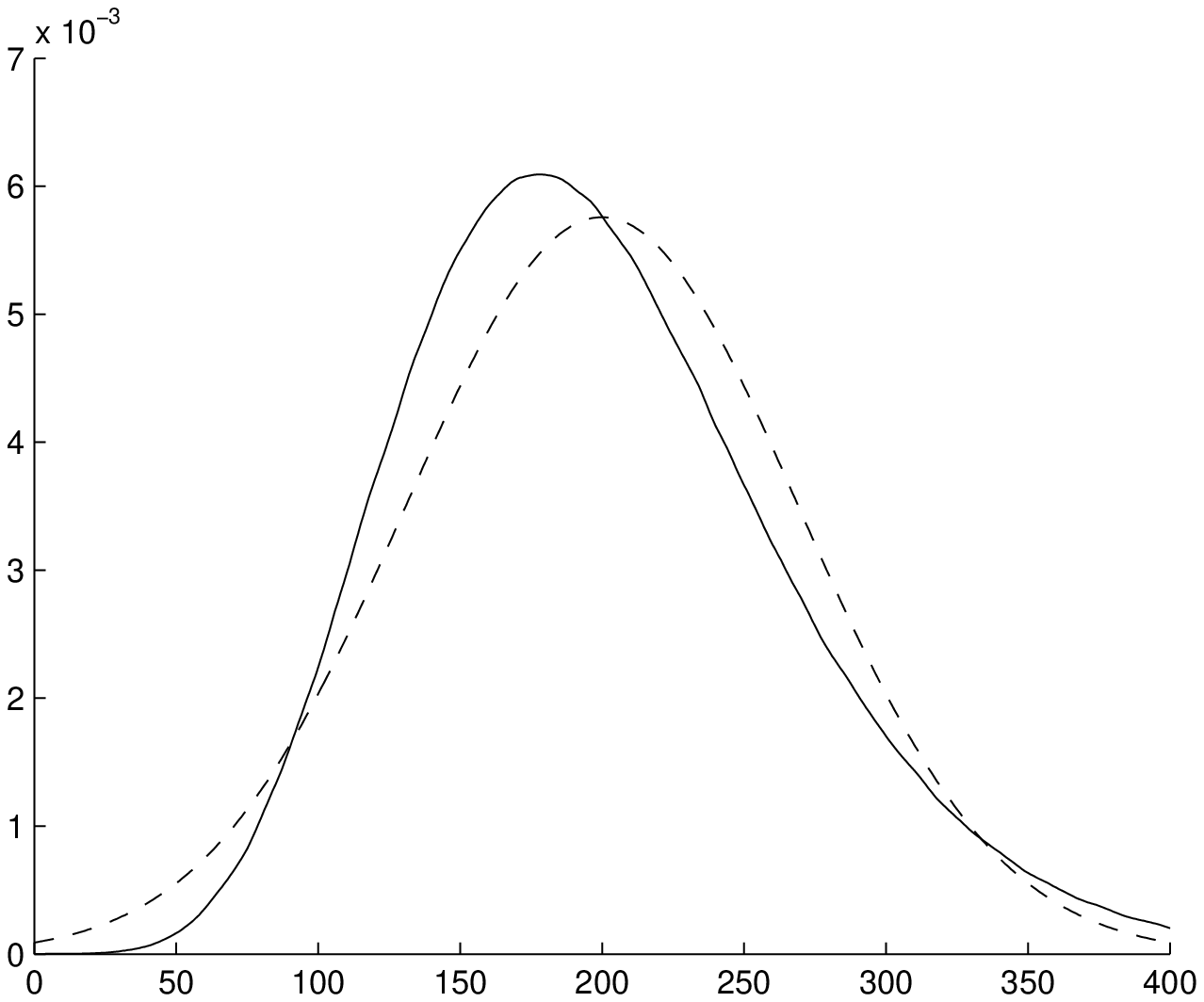}

\includegraphics[width=5cm]{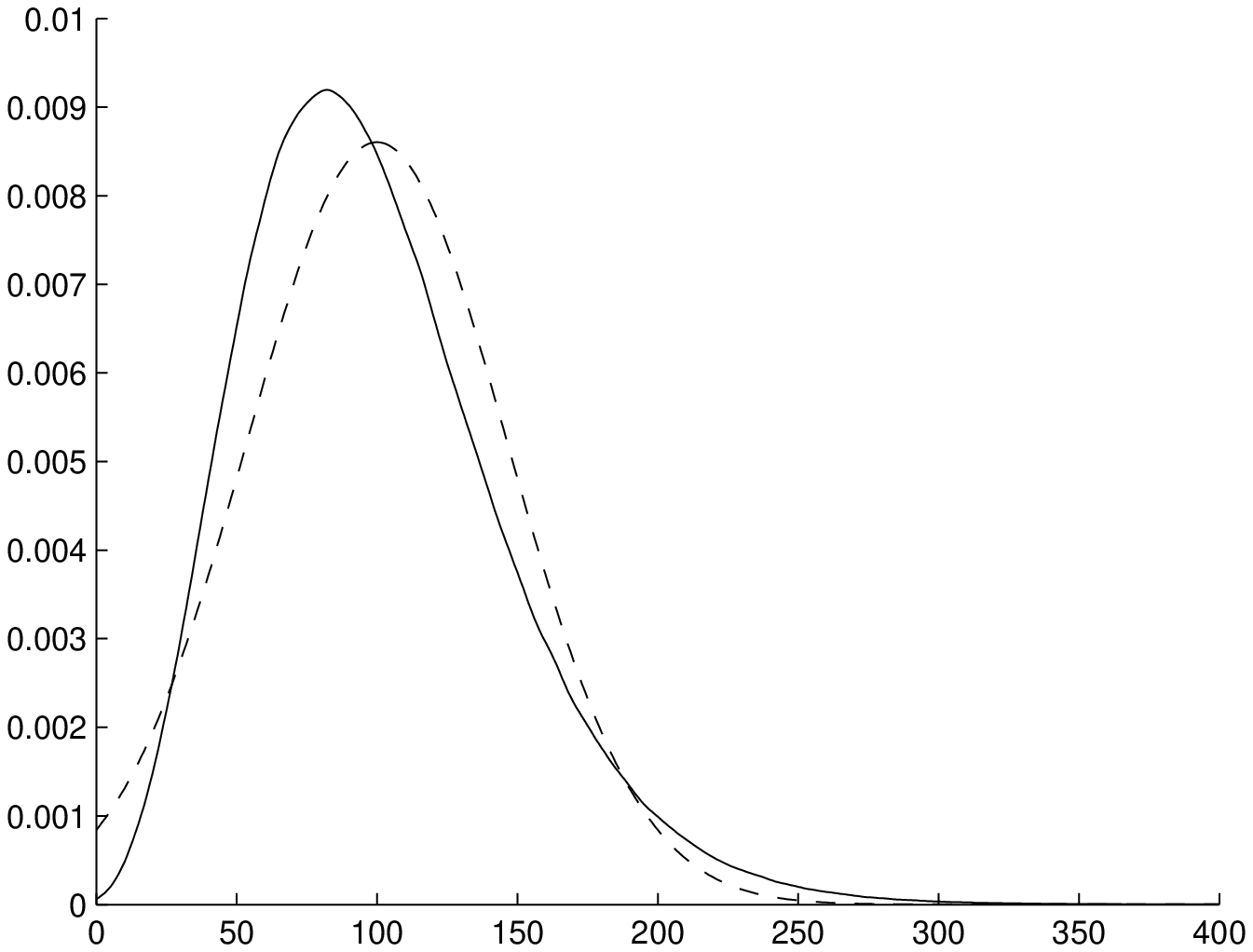}\includegraphics[width=5cm]{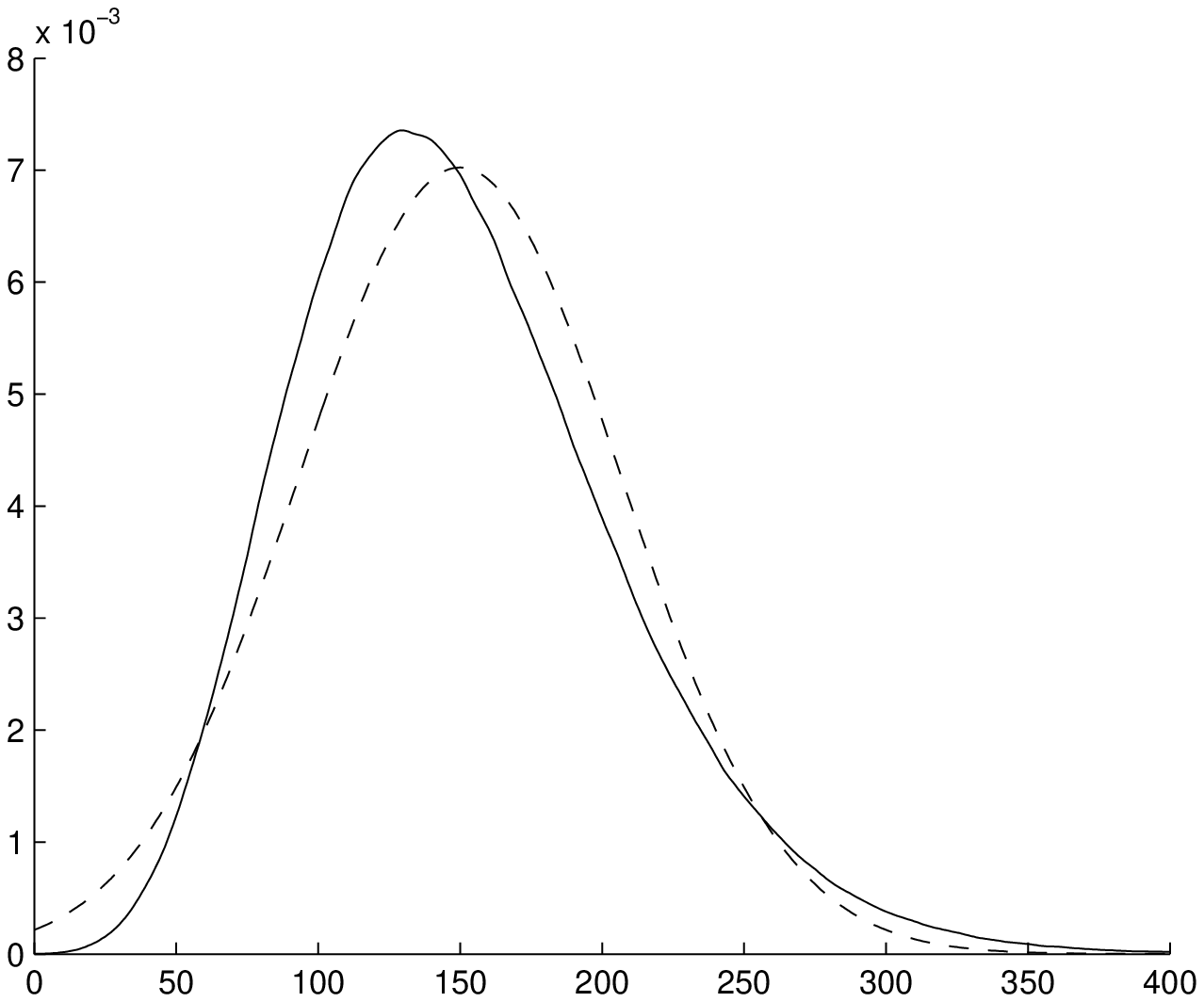}\includegraphics[width=5cm]{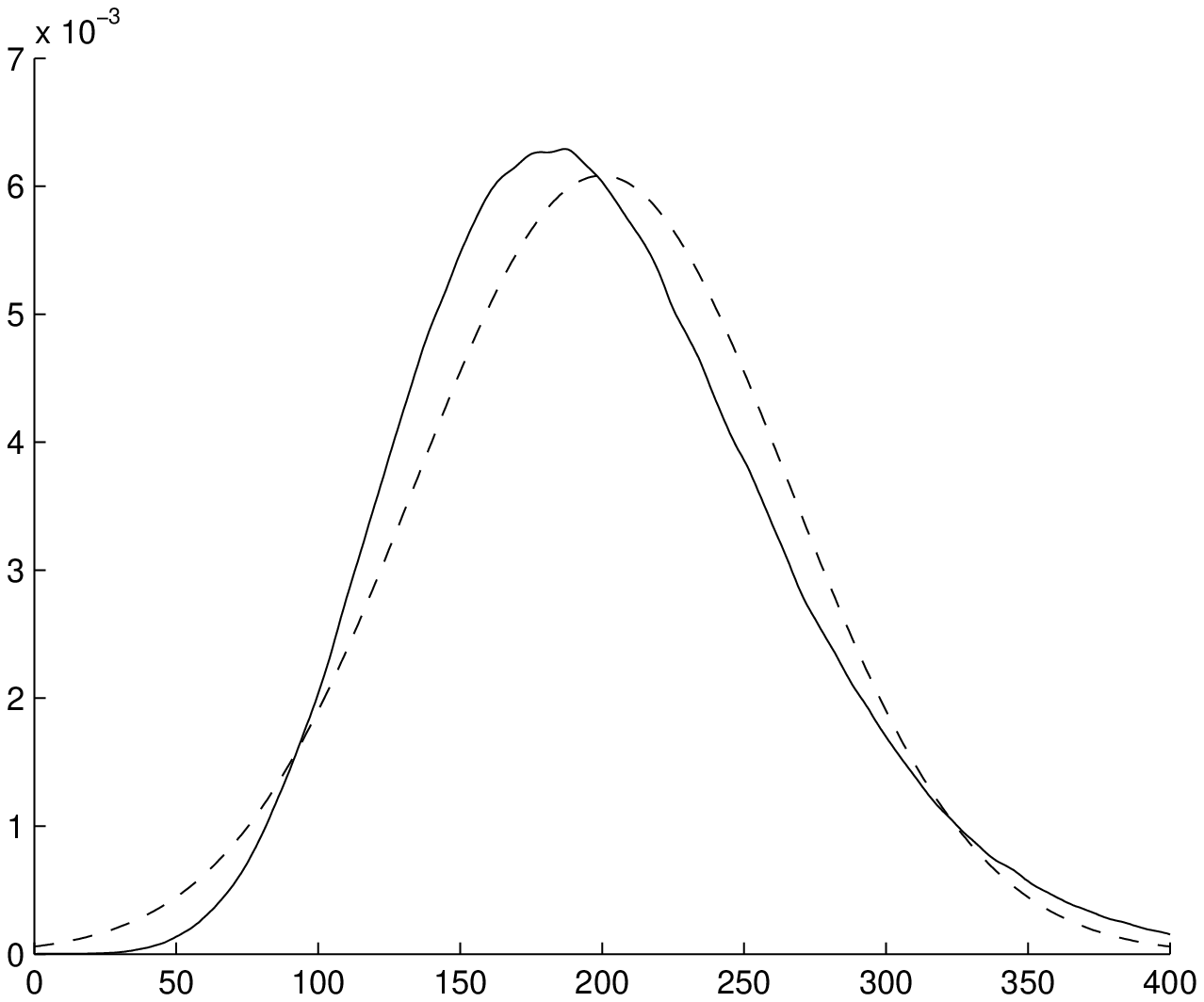}
\end{center}
\caption{The continuous line shows the $n$-fold convolution $\mu^{*n}$ of $\mu=(1-p)\delta_0+p{\rm P}_{a,b}$ for $p=0.1$ and the Pareto distribution ${\rm P}_{a,b}$ with parameter $a=2.1$ in the first line, $a=3$ in the second line, $a=6$ in the third line and $a=10$ in the fourth line and collective sizes $n=100$ in the first row, $n=150$ in the second row and $n=200$ in the third row. The dashed line shows the density of the respective normal distribution in each case.}
\label{fig 1}
\end{figure}

In the first part of this section, we estimate the total claim distribution $\mu^{*n}$, i.e.\ the distribution of $\sum_{i=1}^nY_i$, by means of the empirical distribution based on a Monte-Carlo simulation. The plots in Figure \ref{fig 1} were
derived from a simulation with 100.000 Monte-Carlo paths. We set $p=0.1$ and chose the parameters $a$ and $b$ in such a way that the expected value of a single claim was normalised to $1$. Each line shows the same set of parameters and each row shows the same collective size, starting
with $n=100$ on the left, $n=150$ in the middle and $n=200$ on the right. The first line shows the results for $a=2.1$ and $b=11$, the second line shows $a=3$ and $b=20$, the third
line shows $a=6$ and $b=50$ and the fourth line shows $a=10$ and $b=90$. In each plot the continuous line represents the estimator for $\mu^{*n}$ and the dashed line the probability
density of the normal distribution ${\cal N}_{nm,\,ns^2}$ with $m$ and $s^2$ determined through (\ref{tatsaechliche parameter 1}). We emphasize that $\mu^{*n}$ has in fact point mass in zero. But the point mass is equal to $(1-p)^n$ and
therefore extremely small. This is why the point mass of the empirical estimator is not visible in the plots.

One can see that the empirical total claim distributions in the first line of Figure \ref{fig 1} are strongly skewed to the right even for larger collective sizes. The density of the normal distribution is very flat and has much mass on the negative semiaxis. The reason for this shape is the high variance $s^2$, which increases rapidly as $a$ gets closer to $2$. In the case of $a=2.1$ and $b=11$ this rate is close to zero, saying that large collective sizes are needed to provide a suitable estimator.

In the second line of Figure \ref{fig 1} for $a=3$ and $b=20$ the empirical total claim distributions are still strongly skewed to the right. One can see that the normal
approximation still does not resemble the empirical distribution. The deviation decreases visibly with increasing collective size due to
the higher rate of convergence in the Berry--Esséen theorem. Compared to the first line with $a=2.1$ and $b=11$ the quality of the
normal approximation was increased in the second line with $a=3$ and $b=20$, which can be explained by the increasing rate of convergence in the Berry--Esséen theorem. For
$\lambda\in(2,3]$ the convergence rate to the normal distribution is strictly increasing in $\lambda$. For $\lambda>3$ the convergence
rate can not be improved any more.

In the third and fourth line of Figure \ref{fig 1} for $a=6$ and $b=50$ and $a=10$ and $b=90$ the normal approximation provides a good approximation even for small collective sizes. The empirical total claim distributions are in both cases
almost symmetric and the approximation leads to a good fit of both curves. The third moment of $X_1$ exists in both cases and due to the Berry--Esséen theorem the deviation of $\mu^{*n}$ from the normal distribution converges to zero with rate
$1/2$. We can see that there is no remarkable improval
in the convergence rate once the existence of the third moment is guaranteed.

In the second part of this section we compare the estimated normal approximation with the empirical plug-in estimator where the role of the risk measure $\rho$ is played by the
Value at risk at level $\alpha=0.99$. To save computing time we discretized the Pareto distribution ${\rm P}_{a,b}$ on the equidistant grid $10\N_0=\{0,10,20,\ldots\}$.
The plots in Figure \ref{fig 2} were derived by a Monte-Carlo method using 100 Monte-Carlo paths in each simulation. Once again we chose $p=0.1$. In order to compare the
estimators we first calculated the exact Value at Risks at level $0.99$ of $\mu^{*n}$ (in fact we estimated it by means of a Monte-Carlo simulation based on
100.000 runs) in dependence on the collective size $n$. In each plot in Figure \ref{fig 2} the dotdashed line represents the relative Value at Risk
${\cal R}_\rho(\mu^{*n})/n$, which we take as a reference to illustrate the biases of the estimators. The dashed line shows the estimated normal approximation $\mathcal{R}_{\rho}({\cal N}_{n\widehat{m}_n,\,n\widehat{s}_n^2})/n$ for the Value at Risk relative to $n$. The continuous line shows the empirical plug-in estimator
$\mathcal{R}_{\rho}(\widehat\mu_{n}^{\,*n})/n$ for the Value at Risk relative to $n$.

\begin{figure}[htb]
\begin{center}
\includegraphics[width=7.8cm]{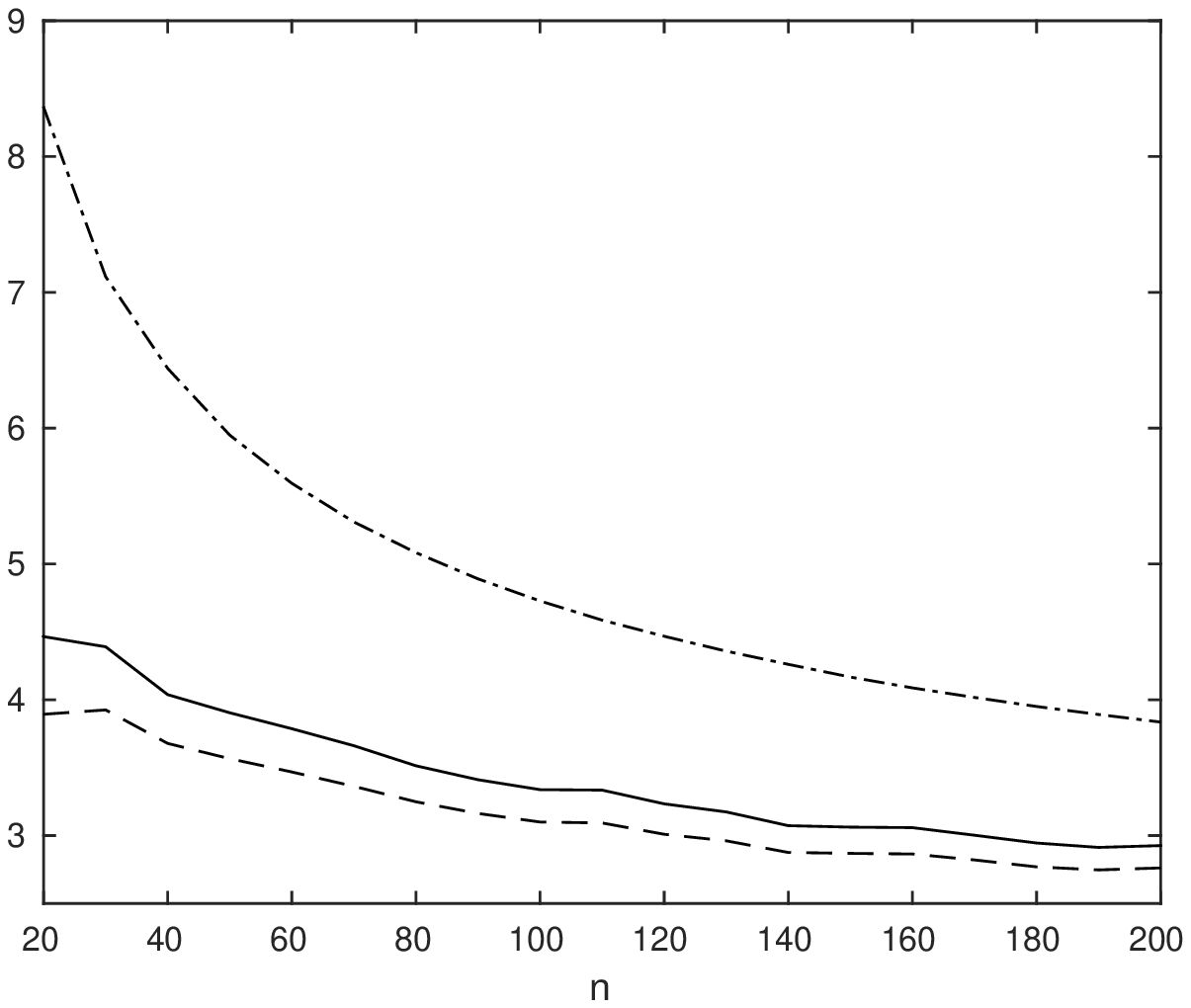}\includegraphics[width=7.8cm]{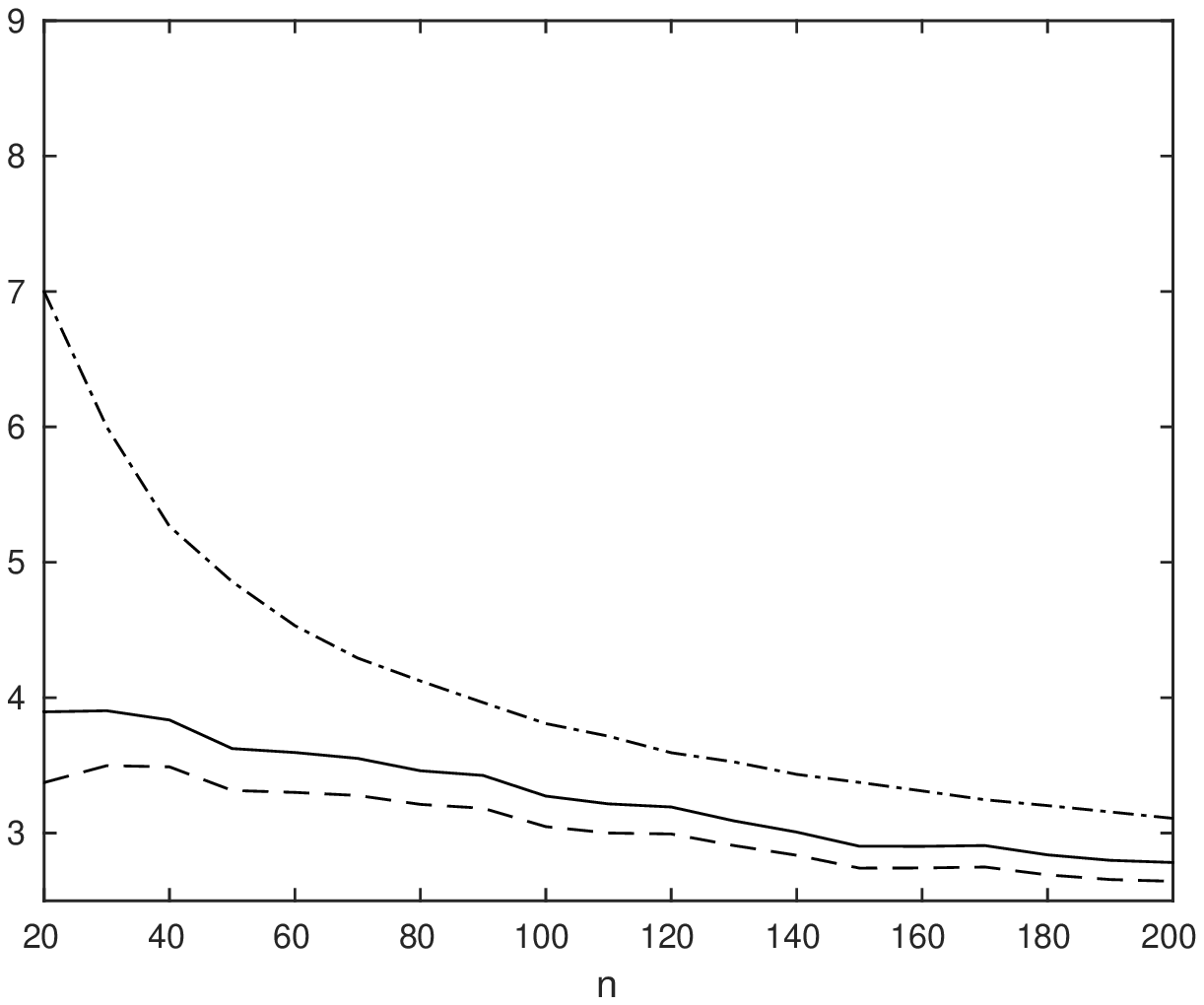}

\includegraphics[width=7.8cm]{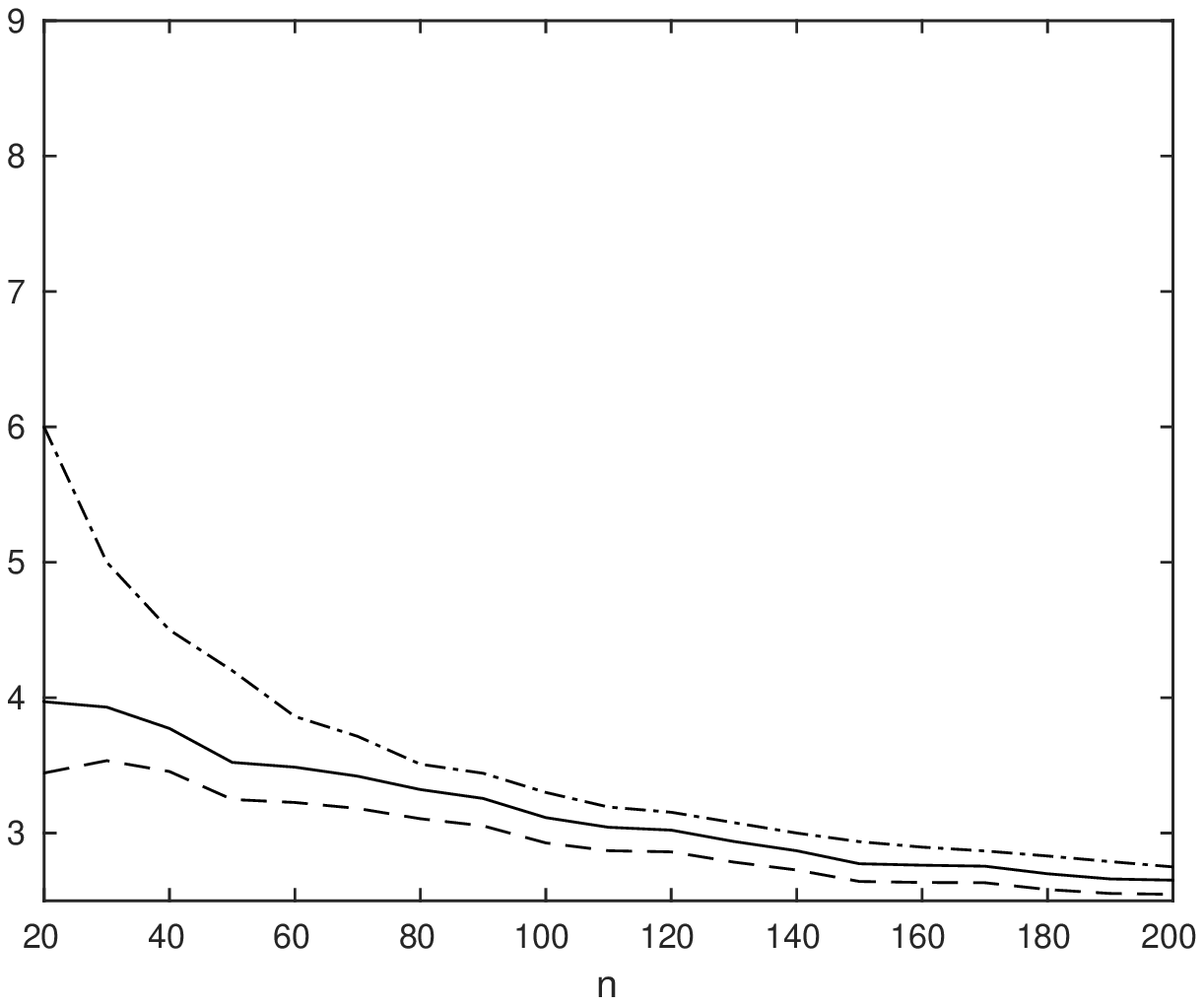}\includegraphics[width=7.8cm]{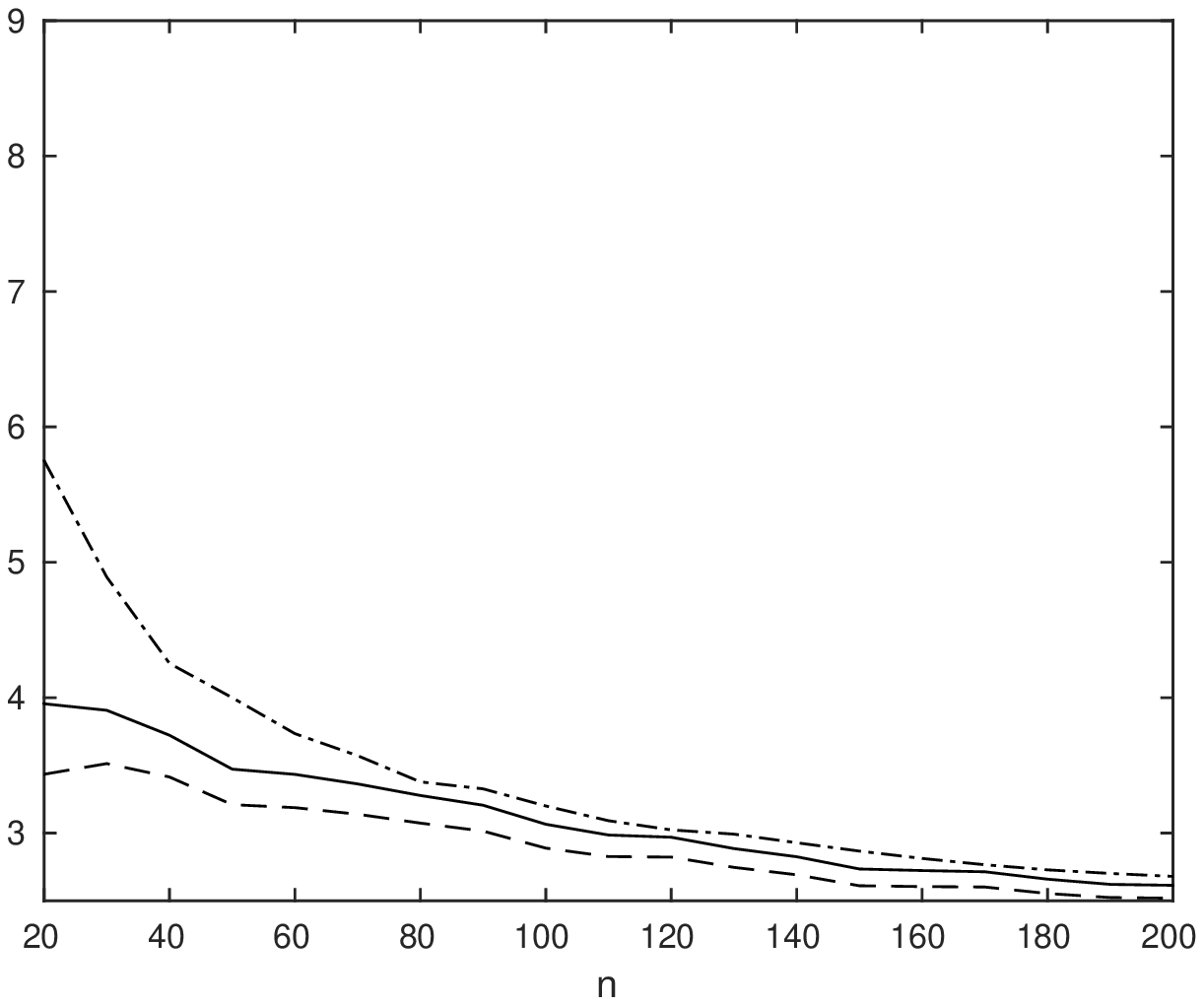}
\end{center}
\caption{${\cal R}_\rho(\mu^{*n})/n$ (dotdashed line) as well as the average of $100$ Monte-Carlo paths of respectively
$\mathcal{R}_{\rho}({\cal N}_{n\widehat{m}_n,\,n\widehat{s}_n^2})/n$ (dashed line) and $\mathcal{R}_\rho(\widehat{\mu}_n^{\,*n})/n$
(continuous line) for $\rho=\vatr_{0.99}$ in dependence on the collective size $n$, showing $a=2.1$ on the left hand side and $a=3$ on the right hand side of the first line and $a=6$ on the left hand side and $a=10$ on the right hand side of the second line.
}
\label{fig 2}
\end{figure}

The first line shows the relative Value at Risks for the parameters $a=2.1$ and $b=11$ on the left and $a=3$ and $b=20$ on the right hand side. In the second line we have $a=6$ and $b=50$ on the left and $a=10$ and $b=90$ on the right hand side. Once again the parameters were chosen such that the expected value of a single claim was normalised to $1$.

For $a=2.1$ we can see that both estimators show a large negative bias. The slow convergence in the Berry--Esséen theorem transfers directly to the convergence of the relative Value at risk of the distributions (recall that the Value at Risk fulfills condition (d) of Assumption \ref{basic assumption} for $\beta=1$). Due to this slow convergence the collective size has to be chosen very large to provide a good estimation. What strikes the most is the large bias of the relative empirical plug-in estimator $\mathcal{R}_{\rho}({\mu^{*n}})/n$. The heaviness of the tails causes the empirical distribution $\widehat{\mu}_n$ to converge very slowly
to $\mu^{*n}$. We can see that in the case $a=3$ the bias of both estimators decreases visibly. However in both cases the empirical plug-in estimator yields a better estimation.

The plots for $a=6$ and $a=10$ resemble each other very much. In both cases the existence of the third moment of $X_1$ is guaranteed, yielding the same rate of convergence in the Berry--Esséen theorem. We can see that for small $n$, e.g. $n\le 40$, both estimators show a large bias. However for $n\le 100$ the empirical plug-in estimator provides a better estimation. For $n\ge 100$ the estimated normal approximation could be preferred over the empirical plug-in estimator, because the biases of both estimators are more or less the same and the estimated normal approximation consumes less computing time.

As a conclusion one can say that the estimated normal approximation is not suitable for heavy-tailed (to medium-tailed) distributions whenever small collective sizes are at hand. In this case it is sensible to apply the empirical plug-in estimator, which consumes more
computing time compared to the estimated normal approximation.


\section{Proofs}\label{Proofs}

The proof of Theorems \ref{main theorem} and \ref{main theorem - EPiE} avails the following nonuniform Berry--Esséen inequality (\ref{Michels inequality - eq - 20}). The inequality involves the nonuniform Kolmogorov distance $d_{\phi_\lambda}$, which was introduced in (\ref{weighted kolmogorov distance}).

\begin{theorem}\label{Michels inequality}
Let $(X_i)$ be a sequence of i.i.d.\ random variables on some probability space $(\Omega,{\cal F},\pr)$ such that $\vari[X_1]>0$ and $\ex[|X_1|^{\lambda}]<\infty$ for some $\lambda>2$. For every $n\in\N$, let
$$
    Z_n:=\frac{\sum_{i=1}^n(X_i-\ex[X_1])}{\sqrt{n\vari[X_1]}}\,.
$$
Then there exists a universal constant $C_{\lambda}\in(0,\infty)$ such that
\begin{equation}\label{Michels inequality - eq - 20}
    d_{\phi_\lambda}(\pr_{Z_n},{\cal N}_{0,1}) \,\le\,C_\lambda\,f(\pr_{X_1})\,n^{-\gamma}\quad\mbox{ for all $n\in\N$}
\end{equation}
with $\gamma:=\min\{1,\lambda-2\}/2$, where
\begin{equation}\label{Michels inequality - eq - 30}
    f(\pr_{X_1})\,:=\,
    \left\{
    \begin{array}{rll}
        \frac{\ex[|X_1-\ex[X_1]|^{\lambda}]}{\vari[X_1]^{\lambda/2}} & , & 2<\lambda\le 3\\
        \max\Big\{\frac{\ex[|X_1-\ex[X_1]|^{3}]}{\vari[X_1]^{3/2}};\frac{\ex[|X_1-\ex[X_1]|^{\lambda}]}{\vari[X_1]^{\lambda/2}}\Big\}& , & \lambda>3
    \end{array}
    \right..
\end{equation}
\end{theorem}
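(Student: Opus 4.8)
The plan is to reduce the estimate to the classical nonuniform Berry--Ess\'een inequality and then to repackage the latter into the stated form. Both $Z_n$ and the quantity $f(\pr_{X_1})$ are unchanged if $X_1$ is replaced by $aX_1+b$ with $a>0$ and $b\in\R$, so we may assume $\ex[X_1]=0$ and $\vari[X_1]=1$. Then $f(\pr_{X_1})=\ex[|X_1|^\lambda]$ for $2<\lambda\le 3$ and $f(\pr_{X_1})=\max\{\ex[|X_1|^3],\ex[|X_1|^\lambda]\}$ for $\lambda>3$, and Lyapunov's inequality gives $\ex[|X_1|^\lambda]\ge(\ex[X_1^2])^{\lambda/2}=1$. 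Since $2^{1-\lambda}(1+|x|)^\lambda\le 1+|x|^\lambda\le 2(1+|x|)^\lambda$, writing $\Phi_{0,1}=F_{{\cal N}_{0,1}}$ it suffices to exhibit a constant $C_\lambda\in(0,\infty)$ depending only on $\lambda$ with $\sup_{x\in\R}|F_{\pr_{Z_n}}(x)-\Phi_{0,1}(x)|\,(1+|x|)^\lambda\le C_\lambda f(\pr_{X_1})\,n^{-\gamma}$ for all $n\in\N$.

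Next I would invoke the nonuniform Berry--Ess\'een inequality in the following standard form (the case $\lambda=3$ being Nagaev's classical estimate, the general case being available in the monograph literature on refinements of the central limit theorem, e.g.\ Petrov's books): for $2<\lambda\le 3$,
\[
 |F_{\pr_{Z_n}}(x)-\Phi_{0,1}(x)|\,\le\,A_\lambda\,\ex[|X_1|^\lambda]\,(1+|x|)^{-\lambda}\,n^{-(\lambda-2)/2},\qquad x\in\R,\ n\in\N ,
\]
and for $\lambda>3$,
\[
 |F_{\pr_{Z_n}}(x)-\Phi_{0,1}(x)|\,\le\,A_\lambda\,(1+|x|)^{-\lambda}\,\big(\ex[|X_1|^3]\,n^{-1/2}+\ex[|X_1|^\lambda]\,n^{1-\lambda/2}\big),\qquad x\in\R,\ n\in\N ,
\]
with $A_\lambda$ depending only on $\lambda$. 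Multiplying by $(1+|x|)^\lambda$ and taking the supremum over $x$, the first display already yields the reduced claim with $C_\lambda=A_\lambda$ (here $\gamma=(\lambda-2)/2$); in the second display one uses $n^{1-\lambda/2}\le n^{-1/2}$ for $\lambda\ge 3$ together with $\ex[|X_1|^3],\ex[|X_1|^\lambda]\le f(\pr_{X_1})$ to bound the right-hand side by $2A_\lambda f(\pr_{X_1})\,n^{-1/2}=2A_\lambda f(\pr_{X_1})\,n^{-\gamma}$. This proves the theorem with $C_\lambda:=2A_\lambda$.

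If one prefers not to quote the inequality, the only genuine work is to prove it, and I would do so along the classical lines: truncate the summands --- at level $\sqrt n$ when $2<\lambda\le 3$, and, in the far tail, at an $x$-dependent level of order $\sqrt n\,(1+|x|)$ when $\lambda>3$ (this truncation contributes the term $\ex[|X_1|^\lambda]\,n^{1-\lambda/2}(1+|x|)^{-\lambda}$ via $n\,\pr[|X_1|>\sqrt n\,(1+|x|)]\le\ex[|X_1|^\lambda]\,n^{1-\lambda/2}(1+|x|)^{-\lambda}$, after controlling the resulting centering and scaling corrections, which are of the same smaller order) --- estimate the characteristic function of the normalized truncated sum on a set $\{|t|\le c\sqrt n\}$, and feed this into a \emph{nonuniform} version of Esseen's smoothing inequality. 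The polynomial weight $(1+|x|)^{-\lambda}$ is produced on Fourier inversion by integrating the pointwise bounds against $|x|^\lambda$ and integrating by parts repeatedly, using that the standard normal density and all its derivatives decay faster than any power. I expect the main obstacle to be precisely this nonuniform refinement: one must achieve the optimal rate $n^{-\gamma}$ and the decay $(1+|x|)^{-\lambda}$ simultaneously, which forces the $x$-dependent choice of truncation level and a careful split of the far tail, all while keeping every constant universal (dependent on $\lambda$ alone). Given the quoted inequality, however, the short steps above are the cleanest route.
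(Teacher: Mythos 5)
Your proposal is correct and takes essentially the same route as the paper: the paper gives no self-contained proof either, but obtains the bound directly from the classical nonuniform Berry--Esseen estimates (Nagaev and Bikelis for $2<\lambda\le 3$, Petrov's Theorem 5.15 for $\lambda>3$). The additional bookkeeping you supply --- standardization, the equivalence of the weights $1+|x|^\lambda$ and $(1+|x|)^\lambda$ up to $\lambda$-dependent constants, and absorbing $n^{1-\lambda/2}\le n^{-1/2}$ for $\lambda\ge 3$ so that both moment terms are dominated by $f(\pr_{X_1})\,n^{-\gamma}$ --- is exactly the routine repackaging the paper leaves implicit.
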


By ``universal constant'' we mean that the constant is independent of $\pr_{X_1}$. Inequality (\ref{Michels inequality - eq - 20}) has been proven by Nagaev \cite{Nagaev1965} and Bikelis \cite{Bikelis1966} for $\lambda=3$ and $\lambda\in(2,3]$, respectively. Meanwhile there exist several estimates for the constant $C_\lambda$ for $\lambda\in(2,3]$; see \cite{NefedovaShevtsova2013} and references cited therein. For $\lambda>3$ the inequality is a direct consequence of Theorem 5.15 in \cite{Petrov1975}.


\subsection{Proof of Theorem \ref{main theorem}}

(i): By part (c) of Assumption \ref{basic assumption} and the representation (\ref{risk measure of N n}) (and its analogue in the case of known parameters), we have
\begin{equation}\label{main theorem - proof - eq 05}
    {\cal R}_\rho({\cal N}_{n\widehat m_{u_n},\,n\widehat s_{u_n}^2})-{\cal R}_\rho({\cal N}_{nm,\,ns^2})\,=\,\sqrt{n}(\widehat s_{u_n}-s){\cal R}_\rho({\cal N}_{0,1})+n(\widehat m_{u_n}-m).
\end{equation}
Since the empirical standard deviation $\widehat s_{u_n}$ converges $\pr$-a.s.\ to the true standard deviation $s$, the claim of part (i) follows through dividing Equation (\ref{main theorem - proof - eq 05}) by $n$.

(ii): Let $S_n$ be a random variable with distribution $\mu^{*n}$, set $Z_n:=(S_n-nm)/(\sqrt{n}s)$, and note that $\law\{\sqrt{n}s Z_n+nm\}=\mu^{*n}$. Write $N_n$ for any random variable distributed according to the normal distribution ${\cal N}_{nm,ns^2}$, and note that $Z:=(N_n-nm)/(\sqrt{n}s)$ is ${\cal N}_{0,1}$-distributed. Due to part (c) of Assumption \ref{basic assumption}, we obtain
\begin{eqnarray}
    {\cal R}_\rho({\cal N}_{nm,\,ns^2})-{\cal R}_\rho(\mu^{*n})
    & = & \rho(\sqrt{n}s Z+nm)-\rho(\sqrt{n}s Z_n+nm)\nonumber\\
    & = & \sqrt{n}s(\rho(Z)-\rho(Z_n))\nonumber\\
    & = & \sqrt{n}s({\cal R}_\rho({\cal N}_{0,1})-{\cal R}_\rho(\mathfrak{m}_n))\label{main theorem - proof - eq 10},
\end{eqnarray}
where $\mathfrak{m}_n$ denotes the law of $Z_n$. The nonuniform Berry--Esséen inequality of Theorem \ref{Michels inequality} shows that there exists a constant
$K_\lambda\in(0,\infty)$ such that $d_{\phi_\lambda}({\cal N}_{0,1},\mathfrak{m}_n)\le K_\lambda n^{-\gamma}$ for all $n\in\N$. 
Along with (\ref{main theorem - proof - eq 10}) and part (d) of Assumption \ref{basic assumption}, this ensures that we can find constants $K,\beta\in(0,\infty)$ such that $n^{-1}|{\cal R}_\rho({\cal N}_{nm,\,ns^2})-{\cal R}_\rho(\mu^{*n})|\le
n^{-1/2}Kd_{\phi_\lambda}({\cal N}_{0,1},\mathfrak{m}_n)^\beta\le CK_\lambda n^{-1/2-\gamma \beta}$ for all $n\in\N$. This completes the proof of part (ii).

(iii): The assertion follows from (i)--(ii).

(iv): By the Marcinkiewicz--Zygmund strong law of large numbers, we have that $n^{r}(\widehat m_{u_n}-m)$ converges $\pr$-a.s.\ to zero for every $r<1/2$. So the assertion follows from part (iii).

(v): The classical Central Limit Theorem says that the law of $n^{1/2}(\widehat m_{u_n}-m)$ converges weakly to ${\cal N}_{0,\,s^2}$. So the assertion follows from Slutzky's lemma and part (iii).
{\hspace*{\fill}\proofendsign\par\bigskip}


\subsection{Proof of Theorem \ref{main theorem - EPiE}}

(i): Analogously to (\ref{main theorem - proof - eq 10}), we obtain
\begin{eqnarray}\label{main theorem - proof - eq 20}
    {\cal R}_\rho({\cal N}_{n\widehat m_{u_n}(\omega),\,n\widehat s_{u_n}^2(\omega)})-{\cal R}_\rho(\widehat\mu_{u_n}^{\,*n}(\omega;\cdot))\,=\,\sqrt{n}\widehat s_{u_n}(\omega)\big({\cal R}_\rho({\cal N}_{0,1})-{\cal R}_\rho(\widehat{\mathfrak{m}}_n(\omega;\cdot)\big)
\end{eqnarray}
for all $\omega\in\Omega$, where $\widehat{\mathfrak{m}}_n(\omega;\cdot)$ denotes the law of the random variable $\widehat Z_n^\omega(\cdot):=(\widehat S_{n}^\omega(\cdot)-n\widehat m_{u_n}(\omega))/(\sqrt{n}\widehat s_{u_n}(\omega))$ for any random variable $\widehat S_n^\omega(\cdot)$ with distribution $\widehat\mu_{u_n}^{\,*n}(\omega;\cdot)$ and defined on some probability space $(\Omega^\omega,{\cal F}^\omega,\pr^\omega)$. For (\ref{main theorem - proof - eq 20}) notice that $\widehat\mu_{u_n}(\omega;\cdot)$ has mean $\widehat m_{u_n}(\omega)$ and standard deviation $\widehat s_{u_n}(\omega)$ for every fixed $\omega$.

First let $\lambda>3$. By the nonuniform Berry--Esséen inequality of Theorem \ref{Michels inequality}, we have
\begin{eqnarray}\label{main theorem - proof - eq 30}
    d_{\phi_\lambda}({\cal N}_{0,1},\widehat{\mathfrak{m}}_n(\omega;\cdot))
    & \le & C_\lambda\max\bigg\{\frac{\int \big|x-\int y\,\widehat\mu_{u_n}(\omega;dy)\big|^{3}\,\widehat\mu_{u_n}(\omega;dx)}{\big\{\int \big(x-\int y\,\widehat\mu_{u_n}(\omega;dy)\big)^{2}\,\widehat\mu_{u_n}(\omega;dx)\big\}^{3/2}};\nonumber\\
    & & \qquad\qquad\frac{\int \big|x-\int y\,\widehat\mu_{u_n}(\omega;dy)\big|^{\lambda}\,\widehat\mu_{u_n}(\omega;dx)}{\big\{\int \big(x-\int y\,\widehat\mu_{u_n}(\omega;dy)\big)^{2}\,\widehat\mu_{u_n}(\omega;dx)\big\}^{\lambda/2}}\bigg\}\,n^{-\gamma}
\end{eqnarray}
for all $n\in\N$, where $C_\lambda\in(0,\infty)$ is a universal constants depending only on $\lambda$ and being independent of $n$ and $\omega$. As a consequence of part (a) of Assumption \ref{main theorem} we have that $\int |x|^{\lambda}\,\widehat\mu_{u_n}(\omega;dx)=\frac{1}{u_n}\sum_{i=1}^{u_n}|Y_i|^\lambda$ converges to $\ex[|Y_1|^\lambda]$ for $\pr$-a.e.\ $\omega$. That is, 
the numerator of
\begin{equation}\label{main theorem - proof - eq 31}
    \frac{\int \big|x-\int y\,\widehat\mu_{u_n}(\omega;dy)\big|^{\lambda}\,\widehat\mu_{u_n}(\omega;dx)}{\big\{\int \big(x-\int y\,\widehat\mu_{u_n}(\omega;dy)\big)^{2}\,\widehat\mu_{u_n}(\omega;dx)\big\}^{\lambda/2}}
\end{equation}
is bounded above by an expression that converges to $2^{\lambda}\ex[|Y_1|^\lambda]$ for $\pr$-a.e.\ $\omega$. The denominator is nothing but $\widehat s_{u_n}(\omega)^{\lambda}$ and thus converges to $s^{\lambda}$ for $\pr$-a.e.\ $\omega$. That is, the expression in (\ref{main theorem - proof - eq 31}) converges to a positive constant for $\pr$-a.e.\ $\omega$. In the same way we obtain that
$$
    \frac{\int \big|x-\int y\,\widehat\mu_{u_n}(\omega;dy)\big|^{3}\,\widehat\mu_{u_n}(\omega;dx)}{\big\{\int \big(x-\int y\,\widehat\mu_{u_n}(\omega;dy)\big)^{2}\,\widehat\mu_{u_n}(\omega;dx)\big\}^{3/2}}
$$
converges to a positive constant for $\pr$-a.e.\ $\omega$. Together with (\ref{main theorem - proof - eq 20}), part (d) of Assumption \ref{basic assumption}, (\ref{main theorem - proof - eq 30}), and the $\pr$-a.s.\ convergence of $\widehat s_{u_n}$ to $s$, this implies
\begin{equation}\label{main theorem - proof - eq 32}
    n^{-1}\big({\cal R}_\rho({\cal N}_{n\widehat m_{u_n}(\omega),\,n\widehat s_{u_n}^2(\omega)})-{\cal R}_\rho(\widehat\mu_{u_n}^{\,*n}(\omega;\cdot)\big)
    =\,{\cal O}(n^{-1/2-\gamma \beta})
\end{equation}
for $\pr$-a.e.\ $\omega$.

For $2<\lambda\le 3$ one can derive (\ref{main theorem - proof - eq 32}) in the same way, where on the right-hand side in (\ref{main theorem - proof - eq 30}) the expression  $\max\{\cdots\}$ has to be replaced by $\int
|x-\int y\,\widehat\mu_{u_n}(\omega;dy)\big|^{3}\widehat\mu_{u_n}(\omega;dx)/\{\int \big(x-\int y\,\widehat\mu_{u_n}(\omega;dy)\big)^{2}\widehat\mu_{u_n}(\omega;dx)\}^{3/2}$.  This completes the proof of part (i).

(ii): The assertion follows from (i)--(ii) of Theorem \ref{main theorem} and part (i) of Theorem \ref{main theorem - EPiE}.

(iii)-(iv): The assertions can be proven in the same way as the assertions (iv)--(v) of Theorem \ref{main theorem}; just replace part (iii) of Theorem \ref{main theorem} by part (ii) of Theorem \ref{main theorem - EPiE}.
{\hspace*{\fill}\proofendsign\par\bigskip}


\subsection{Proof of Remark \ref{remark on measurability}}\label{proof of remark on measurability}

Let $\rho:L^p\rightarrow\R$ be a law-invariant coherent risk measure. First, Theorem 2.8 in \cite{Kraetschmeretal2014} ensures that the corresponding risk functional ${\cal R}_\rho:{\cal M}(L^p)\rightarrow\R$ is continuous for the $p$-weak topology ${\cal O}_{p\scriptsize{\mbox{-w}}}$. The latter is defined to the the coarsest topology on ${\cal M}(L^p)$ w.r.t.\ which each of the maps $\mu\mapsto\int f\,d\mu$, $f\in C_{\scriptsize{\mbox{b}}}^p$, is continuous, where $C_{\scriptsize{\mbox{b}}}^p$ is the set of all continuous functions $f:\R\rightarrow\R$ for which there exists a constant $C>0$ such that $|f(x)|\le C(1+|x|^p)$ for all $x\in\R$. According to Corollary A.45 in \cite{FoellmerSchied2011} the topological space $({\cal M}(L^p),{\cal O}_{p\scriptsize{\mbox{-w}}})$ is Polish. Second, the topology ${\cal O}_{p\scriptsize{\mbox{-w}}}$ is generated by the $L^p$-Wasserstein metric $d_{\scriptsize{\mbox{W}}_p}$ and the mapping ${\cal M}(L^p)\rightarrow{\cal M}(L^p)$, $\mu\mapsto\mu^{*n}$, is
$(d_{\scriptsize{\mbox{W}}_p},d_{\scriptsize{\mbox{W}}_p})$-continuous; see Lemma 8.6 in \cite{BickelFreedman1981}. Third, the mapping $\omega\mapsto\widehat\mu_{u_n}(\omega,\cdot)$ is $({\cal F},\sigma({\cal O}_{p\scriptsize{\mbox{-w}}}))$-measurable. Indeed, it is easily seen that the Borel $\sigma$-algebra $\sigma({\cal O}_{p\scriptsize{\mbox{-w}}})$ on ${\cal M}(L^p)$ is generated by the maps $\mu\mapsto\int fd\mu$, $f\in C_{\scriptsize{\mbox{b}}}^p$. So, for $({\cal F},\sigma({\cal O}_{p\scriptsize{\mbox{-w}}}))$-measurability of the mapping $\Omega\rightarrow{\cal M}(L^p)$, $\omega\mapsto\widehat\mu_{u_n}(\omega,\cdot)$, it suffices to show
\begin{equation}\label{proof of remark on measurability - eq}
    \Big(\int f(x)\,\widehat\mu_{u_n}(\cdot\,,dx)\Big)^{-1}(A)\in{\cal F}\qquad\mbox{ for all }A\in{\cal B}(\R)\mbox{ and }f\in C_{\scriptsize{\mbox{b}}}^p.
\end{equation}
Since $\widehat\mu_{u_n}(\omega,\cdot)$ is a probability kernel from $(\Omega,{\cal F})$ to $(\R,{\cal B}(\R))$, the mapping $\omega\mapsto\int f(x)\,\widehat\mu_{u_n}(\omega,dx)$ is $({\cal F},{\cal B}(\R))$-measurable for every $f\in C_{\rm b}^p$; see e.g.\ Lemma 1.41 in \cite{Kallenberg2002}. This gives (\ref{proof of remark on measurability - eq}). Altogether, we have shown that the mapping $\omega\mapsto{\cal R}_\rho(\widehat\mu_{u_n}^{\,*n}(\omega,\cdot))$ is $({\cal F},{\cal B}(\R))$-measurable.
{\hspace*{\fill}\proofendsign\par\bigskip}


\appendix

\section{On the computation of $\widehat\mu_{u}^{\,*n}$ and ${\cal R}_\rho(\widehat\mu_{u}^{\,*n})$}\label{A remark on the recursive scheme}

In general the computation of the $n$-fold convolution $\widehat\mu_{u}^{\,*n}$ of $\widehat\mu_u$ is more or less impossible. However, in real applications the true $\mu$ has support in $h\N_0:=\{0,h,2h,\ldots\}$ for some fixed $h>0$, where $h$ represents the smallest monetary unit. We stress the fact that continuous distributions are in fact approximations for the {\em equidistant discrete} true single claim distribution, and not vice versa. So the empirical probability measure $\widehat\mu_u$ is concentrated on the equidistant grid $h\N_0$, too. In this case the estimated total claim distribution $\widehat{\mu}_{u}^{\,*n}$ can be computed with the help of the recursive scheme
\begin{eqnarray}
    \widehat\mu_{u}^{\,*n}[\{0\}] & = & \widehat\mu_{u}[\{0\}]^n \label{faltungsformel - eq 1} \\
    \widehat\mu_{u}^{\,*n}[\{jh\}] & = & \frac{1}{j\,\widehat\mu_{u}[\{0\}]}\sum_{\ell=1}^j((n+1)\ell-j)\,\widehat\mu_{u}[\{\ell h\}]\,\widehat\mu_{u}^{\,*n}[\{(j-\ell)h\}]\quad\mbox{ for }j\in\N,\qquad\label{faltungsformel - eq 2}
\end{eqnarray}
provided $\widehat\mu_{u}[\{0\}]>0$; see the discussion below. Note that $\widehat{\mu}_u$ as an empirical probability measure has bounded support. Therefore, the whole distribution $\widehat\mu_{u}^{\,*n}$ can be computed by the scheme (\ref{faltungsformel - eq 1})--(\ref{faltungsformel - eq 2}) in finitely many steps. In particular, the estimator
${\cal R}_\rho(\widehat\mu_{u}^{\,*n})$ can be computed in finitely many steps even for tail-dependent functionals ${\cal R}_\rho$ as, for instance, the one associated with the Average Value at Risk (introduced at the end of Section \ref{Sec DRM}).

To justify the scheme (\ref{faltungsformel - eq 1})--(\ref{faltungsformel - eq 2}) note that the empirical probability probability measure $\widehat\mu_u$ defined in (\ref{estimator for mu}) has the representation
$$
    \widehat\mu_u[\,\cdot\,]\,=\,\widehat p_u\,\widehat\nu_u[\,\cdot\,]+(1-\widehat p_u)\,\delta_0[\,\cdot\,],
$$
where $\widehat p_u:=\widehat\mu_u[(0,\infty)]$ is the mass of $\widehat\mu_u$ on $(0,\infty)$, and $\widehat\nu_u[\,\cdot\,]:=\widehat\mu_u[\,\cdot\,\cap (0,\infty)]/\widehat\mu_u[(0,\infty)]$ is the probability measure $\widehat\mu_u$ conditioned on $(0,\infty)$. It is easily seen that the $n$-fold convolution $\widehat\mu_u^{\,*n}$ coincides with the random convolution
$$
    \widehat\nu_u^{\,*\binv_{n,\widehat p_u}}[\,\cdot\,]\,:=\,\sum_{k=0}^n\widehat\nu_u^{\,*k}[\,\cdot\,]\,\binv_{n,\widehat p_u}[\{k\}]
$$
of $\widehat\nu_u$ w.r.t.\ the binomial distribution $\binv_{n,\widehat p_u}$ with parameters $n$ and $\widehat p_u$, i.e.
\begin{equation}\label{conv equals random conv}
    \widehat\mu_u^{\,*n}\,=\,\widehat\nu_u^{\,*\binv_{n,\widehat p_u}}.
\end{equation}
When $\widehat p_u<1$ and $\widehat\nu_u$ has support in $h\N:=\{h,2h,\ldots\}$ for some $h>0$, the random convolution $\widehat\nu_u^{\,*\binv_{n,\widehat p_u}}$ can be computed with the help of the Panjer recursion \cite{Panjer1981}:
\begin{eqnarray}
    \widehat{\nu}_{u}^{\,*\binv_{n,\widehat p_u}}[\{0\}] & = & \binv_{n,\widehat p_u}[\{0\}] \label{binomial faltungsformel - eq 1} \\
    \widehat{\nu}_{u}^{\,*\binv_{n,\widehat p_u}}[\{jh\}] & = & \frac{\widehat p_u/j}{1-\widehat p_u} \sum_{\ell=1}^j[(n+1)\ell-j]\,\widehat{\nu}_{u}[\{\ell h\}]\,\widehat{\nu}_{u}^{\,*\binv_{n,\widehat p_u}}[\{(j-\ell)h\}]\,\mbox{ for }j\in\N.\qquad\label{binomial faltungsformel - eq 2}
\end{eqnarray}
Since $1-\widehat p_u=\widehat\mu_u[\{0\}]$ and $\widehat p_u\widehat{\nu}_{u}[\{\ell h\}]=\widehat\mu_u[\{\ell h\}]$ for $\ell\in\N=\{1,2,\ldots\}$, the recursive scheme (\ref{faltungsformel - eq 1})--(\ref{faltungsformel - eq 2}) follows from (\ref{conv equals random conv})--(\ref{binomial faltungsformel - eq 2}).



\end{document}